\newtheorem{theorem}{Theorem}[section]
\newtheorem{corollary}[theorem]{Corollary}
 \newtheorem{lemma}[theorem]{Lemma}
 \newtheorem{proposition}[theorem]{Proposition}
 \theoremstyle{definition}
 \newtheorem{definition}[theorem]{Definition}
 \theoremstyle{remark}
 \newtheorem{remark}[theorem]{Remark}
 \newtheorem{example}[theorem]{Example}
 \numberwithin{equation}{subsection}
\newcommand{\bz}{\mathbb Z}
\newcommand{\sgn }{\text{sgn}}
\newcommand{\maca}{\operatorname{{\bf MC}}}
\newcommand{\cinfinito}{\operatorname{{\cal C}^\infty}}
 \newcommand{\dalg}{d}
 \newcommand{\cu }{\mathbb{Q}}
 \newcommand{\alg }{\text{CDGA}\ }
 \newcommand{\lie }{\text{DGL}\ }
 \newcommand{\coa }{\text{CDGC}\ }
 \newcommand{\kerB }{B^\sharp_+}
 \newcommand{\kerb }{B_+}
 \newcommand{\der }{\mathcal{D}er }
 \newcommand{\pl }{1}
 \newcommand{\pc }{2}
 \newcommand{\cinf }{\mathcal {C}^\infty }
 \newcommand{\map}{\text{map}}
 \newcommand{\im}{\text{Im}\,}
 \newcommand{\calc}{{\cal C}}
  \newcommand{\Wh }{\textrm{Wh}_\cu\, }
  \newcommand{\lineal }{{{\phi^\sharp }}}
  \newcommand{\efe }{f}
  \newcommand{\nil }{\textrm{nil}_\cu \,}
  \newcommand{\nilsin }{\textrm{nil}\,}
  \newcommand{\mc}{z}
   \newcommand{\limite}{{\displaystyle \lim_{\leftarrow_n}}\,}
\begin{document}

\title{$L_\infty $ rational homotopy of mapping spaces }


\author{Urtzi Buijs\footnote{Partially supported by the {\em MICIN}  grant MTM2010-15831 and by the Junta de
Andaluc\'\i a grant FQM-213.}, Yves F\'elix\footnote{Partially supported by the {\em MICIN}  grant MTM2010-18089.}{ } and Aniceto Murillo\footnote{Partially supported by the {\em MICIN}  grant MTM2010-18089 and by the Junta de
Andaluc\'\i a grants FQM-213 and P07-FQM-2863. \vskip 1pt 2000 Mathematics Subject
Classification: 55P62, 54C35.\vskip
 1pt
 Key words and phrases: mapping space, rational homotopy theory, $L_\infty $ algebra, $L_\infty $ model of a space.}}



\maketitle

\begin{abstract}
In this paper we describe explicit $L_\infty$ algebras modeling the rational homotopy type of any component of the spaces
 $\map(X,Y)$ and $\map^*(X,Y)$ of free and pointed maps between the finite nilpotent CW-complex $X$ and the finite type nilpotent CW-complex $Y$. When $X$ is of finite type, non necessarily finite, we also show that the algebraic covers of these $L_\infty$ algebras model the universal covers of the corresponding mapping spaces. 
\end{abstract}

\section{Introduction}\label{intro}
Unless explicitly stated otherwise, all spaces considered in this paper shall be nilpotent so that
 they admit classical localization, and any component of mapping spaces between them remains nilpotent.
Let $f\colon X\to Y$ be a continuous based map between path connected,  finite type, CW-complexes
 and denote by  $\map_f(X,Y)$ (resp. $\map_f^*(X,Y)$)  the path component of $f$ in the space of
  continuous maps (resp. pointed continuous maps) from $X$ to $Y$. From the rational point of view, and
  whenever $X$ is finite, the homotopy behavior of these complexes, often infinite but always of finite type, was first described by Haefliger in the fundamental reference \cite{Ha}. Since then, the rational homotopy of mapping spaces has been
   extensively studied in the two classical settings, the Sullivan and the Quillen  approach, each of which is based, roughly speaking, in the replacement of the classical homoto-\break py category by that of commutative differential graded algebras, \alg's henceforth, and differential graded Lie algebras, \lie's henceforth, respectively.

\medskip

The version up to homotopy of the latter algebraic structure derives in the concept of $L_\infty$ algebra, introduced
first in the context of deformation theory of algebraic structures \cite{ss} and highly used
 since  in quite different geometrical settings (see \cite{cs} or \cite{kon} for instance). As these  objects are also susceptible of
  being geometrically realized \cite{getzler,hen} it is natural to search for $L_\infty$ algebras modeling the rational homotopy of
   mapping spaces as above. In the first attempt towards this end \cite{urtzi4}, a somewhat natural \lie structure in the space of derivations
   between  Quillen models of $X$ and $Y$ is deformed, via the now classical \emph{homotopy transfer theorem} \cite{fuk,kontsoib1,kontsoib2,bruno,merk} (which goes back to \cite{hub-kai,kai} for the case of  $A_{\infty}$ structures)
 to obtain an  $L_\infty$ algebra  modeling the based mapping space.  Besides restricting to  pointed maps,  hard computations often prevent this $L_\infty$ model to be used for practical purposes.

\medskip

We overcome these obstacles and give a complete answer by presenting  explicit $L_\infty$
models (see next section for precise definitions), of any component of both free and pointed mapping spaces.
   One of the most important features of the following is that it applies to the general situation (i.e., not only when $X$ is finite)
    in which case the components of the free and/or based mapping spaces are no longer of finite type. In what follows and along the paper, all considered  graded vector spaces, algebras of any kind and coalgebras are defined over a field of characteristic zero, which shall be $\mathbb Q$ when these algebraic objects are modeling the rational homotopy type of a given space.

\medskip

 Consider $\phi\colon C\to\mathcal{C}_\infty(L)$ a cocommutative differential graded coalgebra, \coa henceforth, model of $f\colon X\to Y$, in which $C$ is a \coa model of $X$ and $\mathcal{C}_\infty(L)$ is the Cartan-Chevalley-Eilenberg complex of an $L_\infty$ model of $Y$  (see next section for a brief compendium  on $L_\infty$ algebras from a rational homotopy point of view).  Then, the graded vector space $Hom(C,L)$ has a well known and natural structure of $L_\infty$ algebra with the usual differential and higher brackets given by
$$ \ell_k(f_1,\dots,f_k)=\left[f_1,\dots,f_k\right]=\ell_k \circ (f_1\otimes
              \cdots \otimes f_k) \circ \Delta^{(k-1)}
            $$
in which $k\ge 2$ and, in the last term, $\ell_k$ is the $k$-bracket on $L$. Moreover, for this structure, the induced map denoted also by $\phi\in Hom_{-1}(C,L)$ is a  Maurer-Cartan element which, in a classical and natural way, produces a perturbation $\ell^\phi_k$ of $\ell_k$ for which $Hom(C,L)$ is still an $L_\infty$ algebra,
$$\ell_k^\phi(f_1,\dots,f_k)= \sum_{j =0}^\infty \frac{1}{j!}[ \underbrace{\phi
,\dots ,\phi}_{j \text{ times}},  f_1,\dots ,f_k]\,.$$

Then, restricting to the non-negative part,
$$\mathcal{H}om(C,L)= \left\lbrace
           \begin{array}{c l}
              Hom_i(C,L)& \text{for $i\ge 1$},\\
              \mathcal{Z}(Hom_0(C ,L)) & \text{for $i=0$},
           \end{array}
         \right.
         $$
($\mathcal{Z}$ stands for cycles)  and whenever $Y$ is a finite type complex,  we prove:

\begin{theorem} \label{principallibre} If $X$ is a nilpotent finite complex, then
 $(\mathcal{H}om(C,L), \{\ell^\phi_k\}_{k\ge 1} )$  is
 an $L_\infty $
 model of {\em $\map_f(X,Y)$}.
 \end{theorem}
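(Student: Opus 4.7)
My plan is to prove the theorem by realizing the twisted, truncated $L_\infty$ algebra of the statement through the Getzler-Hinich simplicial $L_\infty$ realization functor $\langle\cdot\rangle$, given on objects by $\langle M\rangle_n=\maca(M\otimes\Omega_n)$, with $\Omega_n$ the cdga of polynomial differential forms on $\Delta^n$. The point of departure is the general fact, underpinning the paper, that when $L$ is an $L_\infty$ model of $Y$ and $\alpha\in\maca(L)$ corresponds to a point $y\in Y$, the component of $\alpha$ in $\langle L\rangle$ is the rationalization of $(Y,y)$; and that translating $\alpha$ to $0$ via the perturbation $\ell_k^\alpha$ and then restricting to non-negative degrees produces an $L_\infty$ algebra whose realization still models this pointed component. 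Applied to $\mathrm{Hom}(C,L)$ with its distinguished Maurer-Cartan element $\phi$, the theorem reduces to showing that the component of $\phi$ in $\langle\mathrm{Hom}(C,L)\rangle$ is naturally equivalent to $\map_f(X,Y)_\cu$.

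The first step towards this identification exploits that $C$ is finite-dimensional, since $X$ is finite. This makes the canonical map
$$\mathrm{Hom}(C,L)\otimes\Omega_n\longrightarrow\mathrm{Hom}(C,L\otimes\Omega_n)$$
an isomorphism of $L_\infty$ algebras, the right-hand side carrying the convolution structure induced by $C$ and $L\otimes\Omega_n$. Combined with the classical adjunction identifying Maurer-Cartan elements of a convolution $L_\infty$ algebra with morphisms of cocommutative coalgebras, this produces a natural bijection
$$\langle\mathrm{Hom}(C,L)\rangle_n\;\cong\;\mathrm{Hom}_{\coa}\bigl(C,\mathcal{C}_\infty(L\otimes\Omega_n)\bigr),$$
sending the basepoint $\phi$ to the fixed coalgebra model of $f$ that appears in the statement.

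The second step is to recognize the simplicial set $[n]\mapsto\mathrm{Hom}_{\coa}(C,\mathcal{C}_\infty(L\otimes\Omega_n))$ as the Sullivan-style simplicial mapping space in the category of cocommutative coalgebras. Using that $\mathcal{C}_\infty$ is compatible with cdga tensoring, one has $\mathcal{C}_\infty(L\otimes\Omega_n)\cong\mathcal{C}_\infty(L)\otimes\Omega_n$, so this becomes the simplicial function complex from the cdgc model $C$ of $X$ to the cdgc model $\mathcal{C}_\infty(L)$ of $Y$; by the classical rational mapping space theorem, dualized from \alg's to \coa's via Quillen's equivalence, its Kan completion is $\map(X,Y)_\cu$, with connected components corresponding to free rational homotopy classes of maps $X\to Y$. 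The component of $\phi$ is therefore $\map_f(X,Y)_\cu$, as required.

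The main obstacle is this last identification. The coalgebra side of the classical rational homotopy dictionary is not as extensively developed as the cdga one, so care is needed either to dualize the Bousfield-Gugenheim argument directly or to route through Quillen's equivalence between \lie's and \coa's, checking in either case that the simplicial enrichment is preserved and that the component of $\phi$ is indeed the component of $f$. Once this simplicial comparison is in place, assembling it with the two preceding steps yields the $L_\infty$ model of $\map_f(X,Y)$ asserted in Theorem \ref{principallibre}.
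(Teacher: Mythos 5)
Your route is genuinely different from the paper's. The paper never passes through the Getzler--Hinich realization: it takes the Haefliger--Brown--Szczarba Sullivan model $(\Lambda W,d)$ of $\map_f(X,Y)$ as the known input, identifies $W$ with the dual of the $\phi$-derivation complex $\der_\phi(\Lambda V,B)$ and the components $d_j$ of its differential with the brackets $\ell_j$ on $s^{-1}\der_\phi(\Lambda V,B)$ (Lemma \ref{derivaciones}, quoting \cite{urtzi2}), and then produces an explicit $L_\infty$ isomorphism $s^{-1}\der_\phi(\Lambda V,B)\cong \mathcal{H}om(B^\sharp,L)$ via the maps $\Theta$ and $\nabla$, checking by hand that $\ell_1$ matches the perturbation by $\phi^\sharp$. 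Your approach is instead the Maurer--Cartan/realization strategy of Berglund \cite{Berg}, which the paper explicitly cites as independent contemporaneous work. When carried out it is more conceptual; the paper's route has the advantage of resting only on the classical CDGA mapping space theorem plus finite-dimensional linear algebra, at the cost of sign-heavy explicit verifications.

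As written, however, your argument has a genuine gap at its first step. The assertion you call ``the general fact underpinning the paper'' --- that for a Maurer--Cartan element $\tau$ of $M$ the connected component of $\tau$ in $\langle M\rangle$ is modeled by the non-negative truncation of the $\tau$-twisted $L_\infty$ structure --- is neither proved nor used anywhere in this paper; it is itself the substantial theorem (Berglund's), and it is exactly where the difficulty of the problem lives, so invoking it begs the question. It also requires nilpotence or completeness hypotheses on $\mathrm{Hom}(C,L)$ for $\maca$ and $\langle\,\cdot\,\rangle$ to make sense, which you do not verify (Theorem \ref{lemanilpotencia} only gives a bound by $\nilsin L$, useless when $L$ is not nilpotent; one must instead use that $C$ is finite-dimensional and connected to get a complete filtration). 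By contrast, the final step you flag as the main obstacle is the more easily fillable one: since $B=C^\sharp$ is finite-dimensional there is a direct natural bijection between $\mathrm{Hom}_{\text{CDGA}}(\Lambda V, B\otimes\Omega_n)$ and $\maca\bigl(\mathrm{Hom}(C,L)\otimes\Omega_n\bigr)$, so one can quote Haefliger/Brown--Szczarba/Bousfield--Gugenheim on the algebra side and avoid dualizing the whole theory to coalgebras; but identifying the homotopy type of the component of $\phi$ with that of the truncated twisted algebra then sends you back to the first, unproved, step.
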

Replacing in the above procedure $C$ and $\Delta$ by $\overline C$ (the kernel of the augmentation) and  $\overline\Delta$ (the reduced  diagonal) we obtain:
\begin{theorem} \label{principalbasado} If $X$ is a nilpotent finite complex, then
 $(\mathcal{H}om(\overline C,L), \{\ell^\phi_k\}_{k\ge 1} )$  is
 an $L_\infty $
 model of {\em $\map_f^*(X,Y)$}.
 \end{theorem}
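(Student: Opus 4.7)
The plan is to deduce Theorem \ref{principalbasado} from Theorem \ref{principallibre} by realizing $\map_f^*(X,Y)$ as the homotopy fiber of the classical evaluation fibration
\[
\map_f^*(X,Y) \To \map_f(X,Y) \xrightarrow{\mathrm{ev}} Y.
\]

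The starting observation is the canonical splitting $C = \cu\cdot 1 \oplus \overline C$ afforded by the counit, which induces a direct sum decomposition $Hom(C,L) = L \oplus Hom(\overline C, L)$ of graded vector spaces. Since a \coa morphism preserves augmentations, the Maurer--Cartan element $\phi\in Hom_{-1}(C,L)$ underlying $\phi\colon C\To\mathcal{C}_\infty(L)$ vanishes on $\cu\cdot 1$ and restricts to an MC element of $Hom(\overline C,L)$. Using the identity $\Delta(c) = 1\otimes c + c\otimes 1 + \overline\Delta(c)$ for $c\in\overline C$, one checks that the higher brackets $\ell_k$ respect the splitting: $Hom(\overline C,L)$ is an $L_\infty$ subalgebra whose structure is precisely the one built from $\overline\Delta$, and the projection onto the $L$-summand is an $L_\infty$ morphism. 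This produces a short exact sequence of $L_\infty$ algebras
\[
0 \To \bigl(Hom(\overline C,L),\{\ell_k^\phi\}\bigr) \To \bigl(Hom(C,L),\{\ell_k^\phi\}\bigr) \To L \To 0,
\]
in which the induced perturbation on the quotient is trivial because $\phi|_{\cu\cdot 1}=0$.

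Passing to the non-negative truncation $\mathcal{H}om$, I would verify that this sequence realizes geometrically to the evaluation fibration. By Theorem \ref{principallibre} the middle term is an $L_\infty$ model of $\map_f(X,Y)$; the quotient $L$ is by hypothesis an $L_\infty$ model of $Y$; and the projection corresponds to $\mathrm{ev}$ because restricting a morphism along $\cu\To C$ is exactly evaluation at the basepoint at the algebraic level. Applying a realization functor (Getzler's Maurer--Cartan nerve, say) then exhibits $\mathcal{H}om(\overline C,L)$ as modeling the homotopy fiber of $\mathrm{ev}$, namely $\map_f^*(X,Y)$, and a comparison of long exact sequences of rational homotopy groups via the five-lemma closes the argument.

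The main obstacle will be the degree-zero bookkeeping: one must confirm that the short exact sequence survives the truncation $\mathcal{H}om$, that is, that $\mathcal{Z}(Hom_0(\overline C,L))$ is the kernel of the restriction $\mathcal{Z}(Hom_0(C,L))\To\mathcal{Z}(L_0)$ and that this restriction remains surjective for the perturbed differential $\ell_1^\phi$. Equivalently, one must check that the MC nerve turns the sequence into an honest fibration, which reduces to controlling the connecting homomorphism in degree zero; here the finiteness of $X$ and the finite-type hypothesis on $Y$ should provide the nilpotency needed to keep the argument inside Getzler's formalism and rule out pathologies arising from infinite-dimensional components.
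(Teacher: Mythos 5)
Your strategy is genuinely different from the paper's. The paper does not deduce the pointed case from the free one: it proves both in parallel, identifying $(\mathcal{H}om(B^\sharp,L),[\,,\dots,]_{\phi^\sharp})$ and $(\mathcal{H}om(B^\sharp_+,L),[\,,\dots,]_{\phi^\sharp})$ with the desuspended complexes of $\phi$-derivations $s^{-1}\mathcal{D}er_\phi(\Lambda V,B)$ and $s^{-1}\mathcal{D}er_\phi(\Lambda V,B_+)$, whose $\mathcal{C}^\infty$-duals are shown, via the results of \cite{urtzi2} on the Haefliger--Brown--Szczarba models, to be Sullivan models of $\map_f(X,Y)$ and $\map_f^*(X,Y)$ respectively. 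Your algebraic setup is sound as far as it goes: evaluation at $1$ is a strict $L_\infty$ surjection $Hom(C,L)\to L$ with kernel $Hom(\overline C,L)$, the brackets on the kernel are exactly those built from $\overline\Delta$, and since $\phi$ preserves coaugmentations the Maurer--Cartan element lies in $Hom_{-1}(\overline C,L)$, so the perturbation restricts to the kernel and induces the unperturbed structure on the quotient.

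The passage from this short exact sequence to the geometric statement, however, has genuine gaps. First, the paper's notion of ``$L_\infty$ model of a space'' is that $\mathcal{C}^\infty(L)$ be a Sullivan model (Definition \ref{linfinito}); it is not Getzler's Maurer--Cartan nerve. Getzler's realization requires nilpotent $L_\infty$ algebras, and neither $L$ (an $L_\infty$ model of a general finite type nilpotent $Y$, e.g.\ $Y=S^2$) nor $\mathcal{H}om(C,L)$ need be nilpotent; nor does the paper provide any comparison between the nerve and the Sullivan-model realization. To stay inside the paper's framework you would have to show that $\mathcal{C}^\infty$ of your surjection is a relative Sullivan model of $\mathrm{ev}$, which essentially reproduces Haefliger's fibration argument and is not easier than the direct proof. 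Second, the fiber of $\mathrm{ev}\colon\map_f(X,Y)\to Y$ over the basepoint is the whole space of pointed maps freely homotopic to $f$, in general a disjoint union of several components of $\map^*(X,Y)$; you must isolate the component of $f$ and justify that the kernel models that component. Third, the degree-zero issues you flag remain unresolved and are not cosmetic: the obvious section $x\mapsto(1\mapsto x,\ \overline C\mapsto 0)$ is not an $\ell_1^\phi$-cycle in general, so surjectivity of $\mathcal{Z}(Hom_0(C,L))\to\mathcal{Z}(L_0)$ is not automatic, and without it the truncated sequence need not realize to a fibration sequence. As written the proposal is a plausible program rather than a proof.
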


When $L$ is a DGL, the above process gives to $\mathcal{H}om(C,L)$ a DGL structure and we recover the results of \cite{urtzi3}. But in some situations, it is more easy to construct a $L_\infty$-model of $Y$ than a Lie model, and then our theorems apply.

\vspace{2mm}
On the other hand, if we assume $X$ to be of finite type (not necessarily finite) we also find $L_\infty$ structures modeling the corresponding component of the mapping space, even though, these are no longer of finite type. Denote by $\widetilde{\mathcal{H}om}(C,L)$ the ``universal" cover of $\mathcal{H}om(C,L)$, i.e.,
 $$\widetilde{\mathcal{H}om}(C,L)= \left\lbrace
           \begin{array}{c l}
              Hom_i(C,L)& \text{for $i\ge 2$},\\
              \mathcal{Z}(Hom_1(C ,L)) & \text{for $i=1$},
           \end{array}
         \right.
         $$
         Then, with the same $L_\infty$ structures as above:

         \begin{theorem} \label{casonofinito} If $X$ is a nilpotent finite type complex, then
 $\widetilde{\mathcal{H}om}(C,L)$  and $\widetilde{\mathcal{H}om}(\overline C,L)$ are
  $L_\infty $
 model of the universal covers of  $\widetilde{\map}_f(X,Y)$ and $\widetilde{\map}_f^*(X,Y)$ respectively.
 \end{theorem}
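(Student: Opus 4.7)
The plan is to reduce the statement to the finite case, already covered by Theorems \ref{principallibre} and \ref{principalbasado}, by means of an inverse-limit argument. Since $X$ is a nilpotent CW-complex of finite type, I would first exhibit it as a filtered union $X=\bigcup_n X_n$ of finite nilpotent subcomplexes, writing $f_n:=f|_{X_n}$. Dually, one arranges a compatible exhaustion $C=\mathrm{colim}_n\,C_n$ of the given coalgebra model $\phi\colon C\to\mathcal{C}_\infty(L)$ by finite sub-coalgebras, with $C_n$ modeling $X_n$ and $\phi_n:=\phi|_{C_n}$ modeling $f_n$, and similarly $\overline{C}=\mathrm{colim}_n\,\overline{C}_n$ on the reduced side.

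Next I would invoke the standard description of a mapping space out of a filtered colimit as the homotopy inverse limit
\[
\map_f(X,Y)\simeq\mathrm{holim}_n\,\map_{f_n}(X_n,Y),
\]
and the analogous formula for pointed maps. Passing to universal covers term-by-term, which is well behaved on towers of fibrations between nilpotent spaces, this gives $\widetilde{\map}_f(X,Y)\simeq\mathrm{holim}_n\,\widetilde{\map}_{f_n}(X_n,Y)$. On the algebraic side, the inclusions $C_n\hookrightarrow C_{n+1}$ induce a tower of degreewise surjective $L_\infty$ morphisms
\[
\cdots\twoheadrightarrow\mathcal{H}om(C_{n+1},L)\twoheadrightarrow\mathcal{H}om(C_n,L)\twoheadrightarrow\cdots,
\]
compatible with the perturbed brackets $\{\ell^{\phi_n}_k\}$, whose inverse limit is precisely $\mathcal{H}om(C,L)$. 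Truncation commutes with this limit, giving $\widetilde{\mathcal{H}om}(C,L)=\lim_n\widetilde{\mathcal{H}om}(C_n,L)$, and identically in the pointed case with $\overline{C}$ and $\overline{C}_n$.

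By Theorems \ref{principallibre} and \ref{principalbasado}, each stage $\mathcal{H}om(C_n,L)$ is an $L_\infty$ model of $\map_{f_n}(X_n,Y)$, so its truncation $\widetilde{\mathcal{H}om}(C_n,L)$ (concentrated in degrees $\ge 1$ with cycles in degree $1$) is an $L_\infty$ model of the simply connected space $\widetilde{\map}_{f_n}(X_n,Y)$, the truncation being exactly the universal-cover construction on the algebraic side. The conclusion would then follow from the chain of weak equivalences
\[
\langle\widetilde{\mathcal{H}om}(C,L)\rangle\simeq\mathrm{holim}_n\,\langle\widetilde{\mathcal{H}om}(C_n,L)\rangle\simeq\mathrm{holim}_n\,\widetilde{\map}_{f_n}(X_n,Y)\simeq\widetilde{\map}_f(X,Y),
\]
where $\langle-\rangle$ denotes the Getzler--Hinich realization functor, and the analogous identity with $\overline{C}$ replacing $C$ handles the pointed case.

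The main obstacle is the first equivalence, namely the compatibility of realization with this inverse limit. This is precisely the reason for passing to universal covers and introducing the truncation defining $\widetilde{\mathcal{H}om}$: realization commutes with such towers only when each term is a simply connected (equivalently, non-negatively graded with vanishing degree-$0$ part) $L_\infty$ algebra of finite type and the transition maps are surjective, conditions ensured here by the finiteness of each $X_n$, by the finite-type hypothesis on $Y$, and by our construction. The Mittag--Leffler condition is automatic from the surjectivity, so $\mathrm{holim}$ computes homotopy groups correctly, and the naturality of universal covers under tower maps of nilpotent fibrations closes the argument.
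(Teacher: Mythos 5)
Your overall skeleton (exhaust $X$ by finite subcomplexes, dualize to a tower of surjective morphisms with inverse limit $\mathcal{H}om(C,L)$, apply the finite case at each stage, and pass to the limit) is the same as the paper's. The difference, and the gap, lies in how the limit is handled. The paper does \emph{not} run the tower argument at the level of a general $L_\infty$ model $L$ of $Y$: it first replaces $L$ by an honest DGL model $\mathcal{L}$ of $Y$, so that each $\mathcal{H}om(C_n,\mathcal{L})$, with differential $\partial f=\delta f+[\phi_n,f]$ and the binary bracket, is a genuine DGL model of $\map_{f_n}(X_n,Y)$ (Remark \ref{casolie}), and then invokes the already established DGL statement \cite[Lemma 5]{urtzi4}, which says precisely that for a tower of surjective DGL models of a tower of fibrations the universal cover of the inverse limit DGL models the universal cover of the inverse limit space. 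Only afterwards does it return to the arbitrary $L_\infty$ model $L$, by proving (Lemma \ref{lemafinal}, a filtration and spectral-sequence comparison using that $C$ is of finite type and everything is bounded below) that the DGL $\mathcal{H}om(C,\mathcal{L})$ and $(\mathcal{H}om(C,L),\{\ell^\phi_k\}_{k\ge 1})$ are quasi-isomorphic $L_\infty$ algebras.

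Your proposal replaces both of these steps by the single assertion that the Getzler--Hinich realization commutes with inverse limits of towers of simply connected finite-type $L_\infty$ algebras with surjective transition maps. That assertion is exactly the crux, and it is not proved: Mittag--Leffler gives $\lim^1$-vanishing and hence control of the homotopy groups of the homotopy inverse limit of spaces, but you still need a natural comparison map $\langle\lim_n \widetilde{\mathcal{H}om}(C_n,L)\rangle\to \mathrm{holim}_n\langle\widetilde{\mathcal{H}om}(C_n,L)\rangle$ and an argument that it is a weak equivalence; no such statement is available off the shelf for general $L_\infty$ algebras in the generality you need, and the paper's detour through the DGL case is precisely how it avoids having to prove one. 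There is also a definitional mismatch: by Definition \ref{linfinito}(3), being an $L_\infty$ model of the $1$-connected, generally infinite-type universal cover means being quasi-isomorphic to its Quillen DGL $\lambda\bigl(\widetilde{\map}_f(X,Y)\bigr)$, not having the correct realization; so a comparison with a DGL model of $Y$ is unavoidable, and that is exactly the content of Lemma \ref{lemafinal} that your argument omits. (A smaller point: you should also verify that the transition maps remain surjective after truncation in degree $1$, i.e., that cycles of $Hom_1(C_n,L)$ lift to cycles of $Hom_1(C_{n+1},L)$; this holds, but it is part of what the cited lemma takes care of rather than being automatic.)
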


Considering that {\em nilpotent $L_\infty$ algebras} play a key role in the understanding of the geometric behavior of these algebraic structures \cite{getzler}, we then bound the {\em nilpotency order} (see \S4 for precise definitions) of the models above as follows:

\begin{theorem}\label{lemanilpotencia}
 $$\nilsin (\mathcal{H}om(C,L), \{\ell^\phi_k\}_{k\ge 1} )\le \nilsin (\mathcal{H}om(C,L), \{\ell_k\}_{k\ge 1} )\le \nilsin L.$$
 The same holds in the ``pointed case", i.e.,
 $$\nilsin (\mathcal{H}om(\overline C,L), \{\ell^\phi_k\}_{k\ge 1} )\le \nilsin (\mathcal{H}om(\overline C,L), \{\ell_k\}_{k\ge 1} )\le \nilsin L.$$
 \end{theorem}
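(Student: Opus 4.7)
The plan is to prove the middle inequality first and then deduce the left-hand one by a formal expansion argument; the free and pointed cases follow by identical reasoning. Recall that for an $L_\infty$ algebra $(M,\{\ell_k\}_{k\ge 1})$ the lower central series is defined inductively by $M^1=M$ and $M^n=\sum_{k\ge 2}\sum \ell_k(M^{i_1},\dots,M^{i_k})$, where the inner sum runs over $i_1+\cdots+i_k=n$ with each $i_j\ge 1$, so that $\nilsin M$ is the smallest integer $n$ with $M^{n+1}=0$.

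For the middle inequality I would exploit the explicit formula
$$
\ell_k(f_1,\dots,f_k)(c)=\sum \pm\,\ell_k\bigl(f_1(c_1),\dots,f_k(c_k)\bigr),
$$
where the sum runs over the components $\Delta^{(k-1)}(c)=\sum c_1\otimes\cdots\otimes c_k$ of the iterated coproduct. A straightforward induction on $n$ then shows that every element of $\mathcal{H}om(C,L)^n$ takes its values in $L^n$: the base case is tautological, and the inductive step is immediate from the formula since by hypothesis each $f_i(c_i)$ lies in some $L^{i_j}$ with $i_1+\cdots+i_k=n$. Consequently, as soon as $L^{n+1}=0$ one also has $\mathcal{H}om(C,L)^{n+1}=0$, which gives $\nilsin(\mathcal{H}om(C,L),\{\ell_k\})\le\nilsin L$.

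For the left-hand inequality, expand
$$
\ell_k^\phi(f_1,\dots,f_k)=\sum_{j\ge 0}\frac{1}{j!}\,\ell_{k+j}(\underbrace{\phi,\dots,\phi}_{j\text{ times}},f_1,\dots,f_k).
$$
Any iterated $\ell^\phi$-bracket involving $n$ of the $f_i$ as ``non-$\phi$'' leaves therefore expands into a sum of iterated unperturbed $\ell$-brackets in which those $n$ original leaves still appear, the extra $\phi$'s only raising the total arity. Hence if $m=\nilsin(\mathcal{H}om(C,L),\{\ell_k\})$ and $n>m$, every summand of this expansion lies in $\mathcal{H}om(C,L)^{n'}$ for some $n'\ge n>m$, hence vanishes. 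This forces $\nilsin(\mathcal{H}om(C,L),\{\ell_k^\phi\})\le m$, and the same reasoning with $\overline C$ and $\overline\Delta$ in place of $C$ and $\Delta$ handles the pointed statement.

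The main obstacle will be the combinatorial bookkeeping in the expansion step: one has to verify that an arbitrary nested $\ell^\phi$-bracket with $n$ true arguments really does expand into a locally finite sum of nested $\ell$-brackets whose total arity (counting $\phi$-leaves) is at least $n$, and that these sums are well defined in $\mathcal{H}om(C,L)$. Local finiteness, however, is automatic once one knows that the unperturbed structure is already nilpotent, which is precisely the middle inequality established first; this is why the order of the two steps matters. Everything else is formal manipulation of the defining formulas for $\ell_k$ and $\ell_k^\phi$ and does not use the Maurer--Cartan equation for $\phi$.
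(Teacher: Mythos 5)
Your proof is correct and follows essentially the same route as the paper's: the bound by $\nilsin L$ comes from the observation that an iterated bracket of $n$ maps, evaluated via the iterated coproduct, lands in $F^nL$, and the comparison of the perturbed and unperturbed structures comes from expanding $\ell_k^\phi$ as $\ell_k$ plus higher-arity brackets with $\phi$-leaves, which only increase the filtration degree. The paper merely performs the two steps in the opposite order (perturbation comparison first, then the $\nilsin L$ bound via a commutative diagram), so the difference is purely organizational.
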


 We also prove that $\Wh Z$, the rational Whitehead length  of a given space $Z$, is a  lower bound of the nilpotency order of any of its $L_\infty$ models and strict inequality may occur. Hence, denoting by $\nil Z$ the nilpotency order of the $L_\infty$ minimal model of  $Z$, we immediately obtain:

\begin{corollary}\label{nilpotencia} (i) If $X$ is a nilpotent finite complex, then:
$$ \max\{\Wh \map_f (X,Y),\Wh \map_f^*(X,Y)\} \le\nil Y.
$$

(ii) If $X$ is a nilpotent finite type complex, then:
$$ \max\{\Wh \widetilde{\map}_f (X,Y),\Wh \widetilde{\map}_f^*(X,Y)\} \le\nil {Y}.
$$
\end{corollary}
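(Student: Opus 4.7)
The plan is to combine three ingredients already assembled in the paper: the $L_\infty$ models of mapping spaces given by Theorems \ref{principallibre}, \ref{principalbasado} and \ref{casonofinito}; the nilpotency bound of Theorem \ref{lemanilpotencia}; and the statement (asserted just before the corollary) that $\Wh Z$ is a lower bound for the nilpotency order of \emph{any} $L_\infty$ model of $Z$. With these in hand the argument is almost tautological, which presumably is why the corollary is flagged as ``immediate''.

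For part (i), I would take $L$ to be the $L_\infty$ minimal model of $Y$, so that by definition $\nilsin L=\nil Y$. By Theorem \ref{principallibre} the pair $(\mathcal{H}om(C,L),\{\ell_k^\phi\}_{k\ge 1})$ is an $L_\infty$ model of $\map_f(X,Y)$, and hence by the Whitehead-length lemma
\[
\Wh \map_f(X,Y)\;\le\;\nilsin (\mathcal{H}om(C,L),\{\ell_k^\phi\}_{k\ge 1}).
\]
Applying Theorem \ref{lemanilpotencia} bounds the right-hand side by $\nilsin L=\nil Y$. The pointed case is identical, substituting Theorem \ref{principalbasado} for Theorem \ref{principallibre} and $\overline C$ for $C$. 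Taking the maximum of the two inequalities gives (i).

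For part (ii) I would repeat the same argument with Theorem \ref{casonofinito} in place of Theorems \ref{principallibre}--\ref{principalbasado}. The only additional point to check is that passing from $\mathcal{H}om(C,L)$ to its cover $\widetilde{\mathcal{H}om}(C,L)$ does not increase the nilpotency order. This is however essentially formal: $\widetilde{\mathcal{H}om}(C,L)$ sits inside $\mathcal{H}om(C,L)$ as a sub-$L_\infty$ algebra (it is obtained by restricting to degrees $\ge 2$ and to cycles in degree $1$, both of which are closed under the brackets $\ell_k^\phi$), so any iterated bracket that vanishes in the ambient algebra vanishes a fortiori in the subalgebra. Combined with Theorem \ref{lemanilpotencia} this yields the bound by $\nil Y$ for the universal covers as well.

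There is really no technical obstacle; the content of the corollary has been packaged into the two auxiliary results it cites. The only minor verification worth spelling out carefully in the written proof is the sub-$L_\infty$ algebra remark used in part (ii), together with an explicit choice of $L$ as the minimal $L_\infty$ model of $Y$ so that the bound in Theorem \ref{lemanilpotencia} becomes $\nil Y$ rather than the nilpotency order of an arbitrary $L_\infty$ model of $Y$.
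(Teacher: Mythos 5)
Your proof is correct and follows exactly the route the paper takes: it combines Proposition \ref{whitehead}, Theorem \ref{lemanilpotencia}, and the models from Theorems \ref{principallibre}, \ref{principalbasado} and \ref{casonofinito}, with $L$ chosen to be the minimal model of $Y$ so that the bound becomes $\nil Y$. Your extra remark in part (ii) that $\widetilde{\mathcal{H}om}(C,L)$ is a sub-$L_\infty$ algebra of $\mathcal{H}om(C,L)$ (so its nilpotency order cannot exceed that of the ambient algebra) is a small verification the paper leaves implicit, and it is valid.
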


This generalizes \cite[6.2]{lupton2} since, for a coformal space $Z$ we have $\Wh Z=\nil Z$ (see  Proposition \ref{whitehead} for details).

 \section{$L_\infty$ rational homotopy theory}

 We shall be using known results on rational homotopy theory for
which \cite{FHT} is a standard reference. Here we simply recall that
a {\em Sullivan
algebra} $(\Lambda V, d)$ is a  \alg   where $\Lambda V$ is the free commutative algebra generated by the (positively graded) vector space $V$ which can be decomposed as the union of increasing graded subspaces $V(0)\subset V(1)\subset\cdots$ for which the differential $d|_{V(0)}=0$  and $d\bigl(V(k)\bigr)\subset\Lambda V(k-1)$. A Sullivan algebra is {\em minimal} if the differential is decomposable, i.e., $\im d\subset \Lambda^+V\cdot\Lambda^+V$. A Sullivan model (resp. Sullivan minimal model)  of a \alg $A$ is a quasi-isomorphism $(\Lambda V,d)\stackrel{\simeq}{\to}A$ from a Sullivan algebra (resp. Sullivan minimal algebra). A Sullivan model (resp. Sullivan minimal model) of a path connected space $X$ is a Sullivan model (resp. Sullivan minimal model) for the \alg, $A_{PL}(X)$, of rational polynomial forms on $X$.

Although in the framework of rational homotopy theory, most algebraic objects are concentrated in non negative degrees, we shall not make this assumption and every considered algebraic gadget, unless explicitly stated otherwise, will be $\bz$-graded.

 In what follows  graded coalgebras $C$ are  augmented  $\varepsilon \colon C\to \cu $ and
 co-augmented   $\cu \hookrightarrow C$ so that
 $\varepsilon (1)=1$ and $\Delta (1)=1\otimes 1$. We write ${\overline C}=\text{ker}\varepsilon$, so that $C=\cu \oplus
 {\overline C}$. Morphisms of coalgebras are assumed to preserve augmentations.

 As usual, the subspace of primitive elements is the kernel of the reduced comultiplication $\overline{\Delta }\colon {\overline C}\to
 {\overline C}\otimes {\overline C}$, $\overline{\Delta }c=\Delta
 c-c\otimes 1-1\otimes c$. Recall that $C$ is {\em primitively cogenerated } if ${\overline C}=\bigcup_n \text{ker}
 \overline{\Delta }^{(n)}$, being  $\overline{\Delta }^{(0)}=id_{{\overline C}}$,
 $\overline{\Delta }^{(1)}=\overline{\Delta }$ and  $\overline{\Delta }^{(n)}=(\overline{\Delta
 }\otimes id\otimes \cdots \otimes id)\circ \overline{\Delta
 }^{(n-1)}\colon {\overline C}\to {\overline C}\otimes \stackrel{n+1}{\cdots} \otimes{\overline C}$.
In the free commutative algebra $\Lambda V$ we consider the usual structure of cocommutative graded coalgebra
whose comultiplication  is the unique morphism of
graded algebras such that $\Delta (v)=v\otimes 1+1\otimes v$ with
$v\in V$.  This coalgebra is cofree in the following sense
 \cite[Lemma 22.1]{FHT}:  any linear map of degree zero from a  primitively cogenerated cocommutative graded
coalgebra $f\colon {\overline C}\to V$,
lifts to a unique morphism of graded coalgebras, $\varphi \colon
C\to \Lambda V$ such that $\xi \varphi _{|{\overline C}}=f$. Here,  $\xi\colon \Lambda^+V\to \Lambda^+V/\Lambda^{\ge 2}V\cong V$ denotes the projection.

We now present a quick overview  on $L_\infty$ algebras.
For a  compendium of known properties
we refer to \cite{lada1} or \cite{lada2}. For a more geometric and/or rational
homotopy theoretic flavored reference see \cite{getzler,kon}.

 \begin{definition}\label{linfinitodef}
       An {\em $L_\infty$ algebra} structure on a graded vector space $L$, often denoted by $(L, \{\ell_k\}_{k\ge 1})$, is a collection of linear maps, called brackets, $\ell_k$, $k\ge 1$, of degree $k-2$
       $$
       \ell_k=[\,,\dots,\,]\colon \otimes^kL\to L,
       $$
 which satisfy:

       (1) $\ell_k$ are graded skew-symmetric, i.e., for any $k$-permutation $\sigma$,
       $$
       [x_{\sigma(1)},\dots,x_{\sigma (k)}]=\sgn(\sigma)\varepsilon_\sigma[x_1,\dots,x_k],
       $$
       where $\varepsilon_\sigma$ is the sign given by the Koszul convention.

       (2) The following generalized Jacobi identities hold:
       $$
      \sum_{{\scriptstyle i+j=n+1}}\sum_{\sigma\in S(i,n-i)}{\textstyle{ \sgn(\sigma)\varepsilon_\sigma(-1)^{i(j-1)}\bigl[[x_{\sigma(1)},\dots,x_{\sigma(i)}],x_{\sigma(i+1)},\dots,x_{\sigma(n)}\bigr]=0.}}
       $$
       By $S(i,n-i)$ we denote the $(i,n-i)$ {\em shuffles} whose elements are permutations $\sigma$ such that $\sigma(1)<\dots<\sigma(i)$ and
$\sigma(i+1)<\dots<\sigma(n)$.
\end{definition}

Note that a \lie is an $L_\infty$ algebra in which $\ell_k=0$ for $k\ge 3$.

\begin{proposition}\label{propoto}  {\em \cite{lada1,lada2}} $L_\infty$ structures in $L$ are in one to one correspondence with codifferentials on the non unital, free cocommutative coalgebra $\Lambda^+ sL$ in which $s$ denotes suspension, i.e., $(sL)_k=L_{k-1}$.
\end{proposition}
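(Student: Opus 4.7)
The plan is to exploit the cofreeness of $\Lambda^+ sL$ as a primitively cogenerated cocommutative coalgebra. First I would invoke the standard fact (the non-unital analogue of the lifting property recalled just before the proposition) that any coderivation $\delta\colon \Lambda^+ sL \to \Lambda^+ sL$ is uniquely determined by its composition with the projection $\pi\colon \Lambda^+ sL \to sL$ onto the cogenerators, and conversely that every graded linear map $\Lambda^+ sL \to sL$ extends uniquely to a coderivation. Decomposing the domain by word length produces a bijection between degree $-1$ coderivations $\delta$ and families $\{\tilde\ell_k\colon \Lambda^k sL \to sL\}_{k\ge 1}$ of degree $-1$ linear maps.

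Next, I would translate from the suspended, symmetric picture to the unsuspended, skew-symmetric one. Since $\Lambda^k sL$ is the $k$-fold graded symmetric power of $sL$, the data $\tilde\ell_k$ is the same as a graded symmetric map $(sL)^{\otimes k}\to sL$ of degree $-1$. The décalage isomorphism
$$s\,\ell_k(x_1,\dots,x_k)=(-1)^{\sum_{i=1}^{k}(k-i)|x_i|}\,\tilde\ell_k(sx_1,\dots,sx_k)$$
then converts these bijectively into linear maps $\ell_k\colon L^{\otimes k}\to L$ of degree $k-2$, and a direct application of the Koszul sign rule shows that the graded symmetry of $\tilde\ell_k$ is equivalent to the graded skew-symmetry of $\ell_k$ required by condition (1) of Definition \ref{linfinitodef}.

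Finally, I would observe that $\delta^2$ is again a coderivation, so by uniqueness $\delta^2=0$ if and only if $\pi\circ\delta^2$ vanishes on each $\Lambda^n sL$. Expanded, $\pi\circ\delta^2$ on $\Lambda^n sL$ is a sum over all decompositions $n=i+(j-1)$ and all $(i,n-i)$-shuffles $\sigma$, in which $\tilde\ell_i$ is applied to the first block of suspended arguments and $\tilde\ell_j$ is applied to the result together with the remaining ones. Desuspending through the formula above and carefully tracking the Koszul signs produced by moving suspensions past arguments and past $\tilde\ell_i$ yields precisely the shuffle signs $\sgn(\sigma)\varepsilon_\sigma$ together with the extra factor $(-1)^{i(j-1)}$, recovering the generalized Jacobi identity (2).

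The main obstacle is, as is usual in this setting, the sign bookkeeping in the last step: the internal degrees of the $\tilde\ell_k$ are uniformly $-1$, whereas those of the $\ell_k$ grow with $k$, so the shift of $k-1$ absorbed into the output suspension must be transported consistently through the composition $\delta\circ\delta$. The precise origin of the factor $(-1)^{i(j-1)}$ lies in commuting one suspension past the output of $\tilde\ell_i$, of post-desuspension degree $i-1$, and past the remaining $j-1$ suspended inputs of $\tilde\ell_j$; the core technical verification is that this commutation produces exactly the stated signs and nothing more.
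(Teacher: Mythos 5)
The paper offers no proof of this proposition, simply citing \cite{lada1,lada2}, and your argument is precisely the standard one found there: corestriction to cogenerators via cofreeness, word-length decomposition, d\'ecalage to pass from symmetric degree $-1$ maps on $sL$ to skew-symmetric maps of degree $k-2$ on $L$, and the identification of $\pi\circ\delta^2=0$ with the generalized Jacobi identities. Your outline is correct, and the sign conventions you state are consistent with those the paper uses later (e.g.\ in Remark \ref{llavelinf} and in the definition of the brackets on $s^{-1}\der_\phi(\Lambda V,B)$), so there is nothing to object to beyond the acknowledged routine sign bookkeeping.
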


In what follows, we often write $\mathcal{C}_\infty (L)=(\Lambda sL, \delta)$ for a given $L_\infty$ structure. Note that this is a generalization of the
 Cartan-Eilenberg-Chevalley complex
for a \lie.

\begin{definition}
       Given two $L_\infty$ algebras $L$ and $L'$, a {\em morphism of $L_\infty$
algebras} is a morphism of differential graded coalgebras
$f\colon \mathcal{C}_\infty (L)=(\Lambda sL,\delta)\to(\Lambda sL',\delta')=\mathcal{C}_\infty (L')$. Abusing notation, and whenever there is no ambiguity, we shall often write simply $f\colon L\to L'$.
\end{definition}

\begin{remark}\label{llavelinf}
Observe that, if $L$ is a finite type graded vector space, a
$L_\infty$ structure on $L$ induces an \alg structure
on $\Lambda (sL)^\sharp $. We shall denote
by $\cinf (L)$ this \alg structure. Explicitly
 $\cinf (L)=(\Lambda V,\dalg )$ is a \alg  in which $V$ and $sL$ are dual graded
vector spaces and $\dalg =\sum_{j \geq \pl } \dalg _j$ where $d_jV\subset\Lambda^jV$ and
$\langle \dalg _jv;sx_1\wedge \dots \wedge sx_j \rangle
=(-1)^\epsilon \langle v;s\left[ x_1,\dots ,x_j \right] \rangle $,
with $\epsilon=(-1)^{|v|+\sum_{k=1}^{j-1}(j-k)|x_k|} $.

 Again, this can be seen as a
generalization of the cochain algebra on a \lie  as $\langle d_1v;sx\rangle=(-1)^{|v|}\langle v;\partial x\rangle$ and $\langle d_2v;sx\wedge sy\rangle=(-1)^{|v|+|x|}\langle v;s[x,y]\rangle=(-1)^{|y|+1}\langle v;s[x,y]\rangle$.

Conversely, suppose $(\Lambda V,\dalg )$ is an arbitrary \alg
 of finite type. Then, an $L_\infty$ algebra structure in $s^{-1}V^\sharp$ is
 uniquely determined by the condition $(\Lambda V,\dalg )=\cinf
(L)$.

\end{remark}

 \begin{definition}\label{minimal}\cite[Def.4.7]{kon} An $L_\infty$ algebra $(L,\{\ell_k\}_{k\ge 1})$ is {\em minimal} if $\ell_1=0$.
   \end{definition}

 This definition is clearly compatible with the one arising from the minimality of Sullivan algebras  whenever $L$ is of finite type, non negatively graded and in which $L_0$ acts nilpotently in $L$ via $\ell_2$. Indeed, it is an easy exercise to show that, in this case, $L$ is minimal if and only if $\mathcal{C}^\infty(L)$ is a minimal Sullivan algebra.

Even if we deal with $L_\infty$ algebras which are not of finite type, nor non-negatively graded, an analogue machinery to that of Sullivan algebras can be developed. As in \cite{kon} we say that an $L_\infty$ algebra $(L,\{\ell_k\}_{k\ge 1})$ is {\em linear contractible} if $\ell_k=0$ for $k\ge 2$ and $H_*(L,\ell_1)=0$. As this property is not invariant under $L_\infty$ isomorphisms we say that $L$ is {\em contractible} if it is isomorphic as $L_\infty$ algebra to a linear contractible one. Then, one has:

\begin{proposition}\label{descomposicion}{\em \cite[4.9]{kon}} Any $L_\infty$ algebra  is $L_\infty$ isomorphic to the direct sum of a minimal and of a
linear contractible $L_\infty$ algebras.
\end{proposition}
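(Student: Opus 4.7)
The plan is to work via Proposition~\ref{propoto} in the codifferential coalgebra picture: writing $\mathcal{C}_\infty(L)=(\Lambda^+ sL,\delta)$, I need to exhibit an isomorphism of coaugmented dg coalgebras between $\mathcal{C}_\infty(L)$ and $\mathcal{C}_\infty(M\oplus K)$, where $M$ carries a minimal $L_\infty$ structure, $K$ is linear contractible, and the direct sum has vanishing mixed brackets.

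First I would isolate the linear piece. The codifferential $\delta$ has a linear component $\delta_1$ which, up to suspension, coincides with $\ell_1$ and squares to zero. Since we work over a field, one can split
$$
(sL,\delta_1)=H\oplus (sA\oplus sB),
$$
with $\delta_1$ vanishing on $H$ and restricting to an isomorphism $\delta_1\colon sB\xrightarrow{\cong}sA$. Set $M=s^{-1}H$ and let $K=s^{-1}(sA\oplus sB)$ carry only the bracket $\ell_1$ given by the desuspension of $\delta_1|_{sB}$, so that $K$ is linear contractible. Next I would apply the homotopy transfer theorem to the deformation retract of $(sL,\delta_1)$ onto $H$ determined by this splitting (homotopy $h$ equal to $(\delta_1|_{sB})^{-1}$ on $sA$ and zero elsewhere); this transports $\delta$ to a minimal $L_\infty$ structure on $M$. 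Let $\Phi\colon\mathcal{C}_\infty(M\oplus K)\to\mathcal{C}_\infty(L)$ be the unique morphism of graded coalgebras whose linear part is the tautological identification $sM\oplus sK\xrightarrow{\cong}sL$; by cofreeness it is automatically an isomorphism of graded coalgebras.

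The hard part is to arrange that $\Phi$ commutes with the codifferentials. By construction $\Phi$ is already compatible on linear parts, so the defect of compatibility is concentrated in higher word-length components and lives as a cocycle in the convolution $L_\infty$ algebra $Hom\bigl(\mathcal{C}_\infty(M\oplus K),L\bigr)$. The obstructions supported purely on $\Lambda^+ sM$ vanish because the homotopy transfer output is a strict morphism of the transferred structure, and those supported purely on $\Lambda^+ sK$ vanish because $K$ carries no higher brackets; the truly mixed obstructions can be killed inductively on total word length using that $\delta_1|_{sB}$ is invertible, absorbing each correction into a perturbation of $\Phi$ supported on tensors hitting $sA\oplus sB$. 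Each fixed word length is affected only finitely often, so the inductive process converges and yields a dg coalgebra isomorphism $\mathcal{C}_\infty(M\oplus K)\xrightarrow{\cong}\mathcal{C}_\infty(L)$. Translating back through Proposition~\ref{propoto} gives the required decomposition $L\cong M\oplus K$ of $L_\infty$ algebras.
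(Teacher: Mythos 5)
The first thing to note is that the paper does not prove Proposition \ref{descomposicion} at all: it is imported verbatim from \cite[4.9]{kon}, so there is no in-text argument to measure you against. Your proposal reconstructs what is essentially Kontsevich's own sketch: split $(sL,\delta_1)$ as $H\oplus sA\oplus sB$ with $\delta_1\colon sB\to sA$ invertible, put the minimal structure on $M=s^{-1}H$ and the linear contractible one on $K=s^{-1}(sA\oplus sB)$, build the coalgebra isomorphism $\Phi$ from the linear identification by cofreeness (the same fact the paper records just before Theorem \ref{unicidad}), and then kill the components of the codifferential violating the direct-sum decomposition by an induction on word length driven by the invertibility of $\delta_1|_{sB}$. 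The convergence remark -- each fixed word-length component is modified only finitely often -- is the right one.

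There is, however, one concretely false step. You claim the obstructions supported on $\Lambda^+sM$ vanish ``because the homotopy transfer output is a strict morphism of the transferred structure.'' It is not: homotopy transfer produces an $L_\infty$ quasi-isomorphism $i_\infty\colon M\to L$ with generically nonzero higher Taylor coefficients, and the strict linear inclusion of $M$ equipped with the transferred brackets is in general \emph{not} an $L_\infty$ morphism; so the naive $\Phi$ does have a nontrivial defect concentrated on $\Lambda^+sM$. Two standard repairs close the gap: either declare $\Phi|_{\Lambda^+sM}$ to be the full transfer quasi-isomorphism $i_\infty$ rather than its linear part, or drop the appeal to homotopy transfer entirely and let your inductive correction also produce the brackets on $M$ as the pure-$H$ component of the conjugated codifferential (automatically minimal since $\delta_1|_H=0$). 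In either case you should make explicit the homological lemma your induction silently uses, namely that a $[\delta_1,-]$-cocycle in $Hom(\Lambda^k sL,sL)$ whose pure-$H$ component vanishes is exact; this is what lets you absorb \emph{every} bad component -- not only the mixed ones, but also those with all inputs in $H$ and output in $sA\oplus sB$ -- into the next-order perturbation of $\Phi$.
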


Following \cite{kon},
we say that $f$ is a {\em quasi-isomorphism} if $f^{(1)}\colon (L,\ell_1)\stackrel{\simeq}{\to}(L',\ell'_1)$ is a quasi-isomorphism of differential graded vector spaces.
It is important to remark that if $f\colon L\to L'$ is an $L_\infty$ morphism  for which $f^{(1)}\colon L\stackrel{\cong}{\to}L'$ is an isomorphism of graded vector spaces, then $f$ is an isomorphism. This fact, and the proposition above, let us prove:

\begin{theorem}\label{unicidad} (i) If $f\colon\mathcal{C}_\infty(L)\longrightarrow \mathcal{C}_\infty(L')$ is a quasi-isomorphism of $L_\infty$ algebras there is another $L_\infty$ morphism $g\colon \mathcal{C}_\infty(L')\longrightarrow \mathcal{C}_\infty(L)$ for which the isomorphism $H_*(g^{(1)})\colon H_*(L',\ell'_1)\to H_*(L,\ell_1)$ is the inverse of $H_*(f^{(1)})$.

(ii) A quasi-isomorphism between minimal $L_\infty$ algebras is always an isomorphism.
\end{theorem}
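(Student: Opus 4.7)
My plan is to establish part (ii) first, since part (i) will follow from it via Proposition \ref{descomposicion}. For (ii), suppose both $L$ and $L'$ are minimal, so that $\ell_1=0$ and $\ell'_1=0$. Then the underlying chain complexes $(L,\ell_1)$ and $(L',\ell'_1)$ have trivial differential, whence $H_*(L,\ell_1)=L$ and $H_*(L',\ell'_1)=L'$. Saying that $f$ is a quasi-isomorphism now means exactly that $f^{(1)}\colon L\to L'$ is already an isomorphism of graded vector spaces, and the remark immediately preceding the theorem then forces $f$ itself to be an $L_\infty$ isomorphism.

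For part (i), I would invoke Proposition \ref{descomposicion} to fix $L_\infty$ decompositions $L\cong L_m\oplus L_c$ and $L'\cong L'_m\oplus L'_c$, where the summands with subscript $m$ are minimal and those with subscript $c$ are linear contractible. In these direct sums the inclusions $i\colon L_m\hookrightarrow L$, $i'\colon L'_m\hookrightarrow L'$ and the projections $p\colon L\twoheadrightarrow L_m$, $p'\colon L'\twoheadrightarrow L'_m$ are strict $L_\infty$ morphisms, and since the linear contractible summands have acyclic linear part, all four are quasi-isomorphisms. Consequently $\bar f=p'\circ f\circ i\colon L_m\to L'_m$ is a quasi-isomorphism between minimal $L_\infty$ algebras. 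By part (ii), $\bar f$ is an $L_\infty$ isomorphism, and I define $g=i\circ\bar f^{-1}\circ p'\colon L'\to L$, which is an $L_\infty$ morphism. Under the canonical identifications $H_*(L,\ell_1)\cong L_m$ and $H_*(L',\ell'_1)\cong L'_m$, the map $H_*(f^{(1)})$ becomes $\bar f$ while $H_*(g^{(1)})$ becomes $\bar f^{-1}$, so they are mutually inverse.

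The only step demanding care is the last cohomological compatibility, i.e., that under the above identifications $H_*(f^{(1)})$ really corresponds to $\bar f$; this reduces to the fact that the splittings of Proposition \ref{descomposicion} are themselves $L_\infty$ isomorphisms and hence respect linear-part cohomology. I do not anticipate a serious obstacle: once the decomposition theorem and the remark about isomorphisms of linear parts are granted, the entire argument is a formal diagram chase.
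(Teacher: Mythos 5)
Your proposal is correct and follows exactly the route the paper intends: the paper gives no written-out proof but explicitly points to the remark that an $L_\infty$ morphism with bijective linear part is an isomorphism together with Proposition \ref{descomposicion}, which is precisely the combination you use (part (ii) from the remark, part (i) by passing to the minimal summands and inverting there). The only cosmetic caveat is that the inclusions and projections are strict only for the literal direct sums, not after composing with the decomposition isomorphisms, but since you only need them to be $L_\infty$ quasi-isomorphisms this does not affect the argument.
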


Given $L_\infty$ algebras $L$ and $L'$,  we say that  $L$ is an {\em $L_\infty$ model} of $L'$ if $L$ and $L'$ are quasi-isomorphic. In view of Theorem \ref{unicidad}(i) above, this definition is appropriate as ``being quasi-isomorphic" is an equivalence relation among $L_\infty$ algebras. If $L$ is minimal we say that $L$ is the {\em minimal model} of $L'$. By Theorem \ref{unicidad}(ii), the minimal model of $L'$ is unique up to isomorphism of $L_\infty$ algebras.

\begin{definition}\label{linfinito}
(1)  An $L_\infty$ algebra $L$ (resp. $L_\infty$ minimal algebra $L$) of finite type is an {\em $L_\infty$ model} (resp. {\em $L_\infty$ minimal model}) of a finite type complex  $X$  if   $\cinfinito (L)$ is a Sullivan model (resp. Sullivan minimal model) of $X$.

(2)  An   $L_\infty$ minimal algebra $L$  of finite type is an   $L_\infty$ minimal model of a finite type complex  $X$  if   $\cinfinito (L)$ is a   Sullivan minimal model  of $X$.

(3)  An $L_\infty$ algebra  is an {\em $L_\infty$ model}   of a $1$-connected complex (not necessarily of finite type)  $X$ if it is a model  of the DGL $\lambda(X)$.
\end{definition}

Here $\lambda$ denotes the Quillen functor \cite{quillen} that associates to any 1-connected space $X$
a DGL (free as Lie algebra)
$\lambda(X)$, which determines an equivalence between the homotopy
categories of rational $1$-connected spaces  and that of reduced
($L_{\le0}=0$) DGL's over $\cu$.

\vspace{2mm}
\begin{remark}\label{uff}
Observe that, if  $X$ is $1$-connected and of finite type and $\mathbb L$ denotes the Quillen minimal model of $\lambda(X)$,
then $\calc^\infty(\mathbb L)$ is
 quasi-isomorphic to the Sullivan model of $X$ \cite{ma}. Thus, the above definitions are correct as both coincide in the intersection class of 1-connected, finite type, CW-complexes. Note also that, in this case, the $L_\infty$ minimal model of $X$ is $(L, \{\ell_k\}) $ in which $L\cong \pi_*(\Omega X)\otimes \mathbb Q$ and each $\ell_k$ is identified with the higher Whitehead product of order $k$.
\end{remark}

\begin{definition} The {\em  lower central series } of an $L_\infty $ algebra $L$ is, as in
the classical setting, defined inductively by
 $F^1L=L$
and, for $i>1$,
$$F^iL=\sum_{i_1+\cdots +i_k=i}\left[ F^{i_1}L,\dots , F^{i_k}L
\right].$$
$L$ is {\em nilpotent} if $F^iL=0$ for some $i\ge 1$.
\end{definition}

Following \cite{getzler}, given a nilpotent $L_\infty $ algebra $L$, the curvature for $\mc
\in L_{-1}$ is defined as
$$\mathcal{F}(\mc )=\partial \mc +\sum_{k =2}^\infty \frac{1}{k !}[ \mc ^{\wedge  k } ]\in L_{-2}$$
in which $\partial=\ell_1$ and $[ \mc ^{\wedge  k } ]=[\mc,\stackrel{k}{\ldots},\mc].$

\begin{definition}

The {\em Maurer-Cartan set} $\maca(L)$ of a nilpotent $L_\infty $ algebra
$L$ is the set of those $\mc \in L_{-1}$ of zero curvature, i.e, satisfying the
{\em Maurer-Cartan equation} $\mathcal{F}(\mc )=0$.
\end{definition}

Next, we recall how a Maurer-Cartan element of a given nilpotent $L_\infty $ algebra $L$ gives rise to another $L_\infty $ structure. In fact, for any $k\ge 1$ and
any element $\mc \in L_{-1}$, define
$$\ell_k^z(x_1,\dots,x_k)=[x_1,\dots ,x_k]_\mc =\sum_{j =0}^\infty \frac{1}{j!}[\mc ^{\wedge j }, x_1,\dots ,x_k]$$
in which $[\mc^{\wedge
j}, x_1,\dots ,x_k ]$ is an abbreviation for $[ \underbrace{\mc
,\dots ,\mc}_{j \text{ times}}, x_1,\dots ,x_k ]$. Observe that this is a perturbation of the original $L_\infty$ structure for which:

\begin{proposition}\label{relativa}{\em \cite[Prop.4.4]{getzler}}
If $\mc \in \text{\bf MC} (L)$, then $(L,\{\ell_i^z\})$ is  an $L_\infty $ algebra.
\end{proposition}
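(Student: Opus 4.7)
The plan is to apply Proposition \ref{propoto}: producing an $L_\infty$ structure $\{\ell_k^z\}_{k\ge 1}$ on $L$ amounts to exhibiting a codifferential $\delta^z$ on the cofree cocommutative coalgebra $\Lambda^+ sL$ whose corestrictions to $sL$ correspond, via Remark \ref{llavelinf}, to the brackets $\ell_k^z$. So the proof reduces to (a) constructing $\delta^z$ from the original $\delta$ together with $z$, and (b) checking $(\delta^z)^2=0$.

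The key object is the group-like element $e^{sz}=\sum_{j\ge 0}\frac{(sz)^{\wedge j}}{j!}$ lying in a suitable completion $\widehat{\Lambda sL}$. Because $L$ is nilpotent, every sum that will appear after applying a corestriction is in fact finite, so no convergence issue arises and all constructions descend to $\Lambda^+ sL$ itself. The first step is to translate the Maurer-Cartan equation $\mathcal{F}(z)=0$ into the coalgebraic identity $\delta(e^{sz})=0$; this follows by applying $\delta^{(1)}$ term by term to $e^{sz}$ and using that, because $e^{sz}$ is group-like, $\delta(e^{sz})$ is entirely determined by its projection onto primitives.

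Next, I would define $\delta^z$ as the unique coderivation of $\Lambda^+ sL$ whose corestriction is
$$(\delta^z)^{(1)}(sx_1\wedge\cdots\wedge sx_k)=\sum_{j\ge 0}\frac{1}{j!}\,\delta^{(1)}\bigl((sz)^{\wedge j}\wedge sx_1\wedge\cdots\wedge sx_k\bigr),$$
which, under the suspension dictionary of Remark \ref{llavelinf}, corresponds exactly to $\ell_k^z$. Equivalently, $\delta^z$ is obtained by conjugating $\delta$ by the coalgebra automorphism ``translation by $e^{sz}$'' of $\widehat{\Lambda sL}$. Since conjugation by a coalgebra automorphism preserves the squared-to-zero property at the level of the completion, and the only defect obstructing this to descend to a codifferential of $\Lambda^+ sL$ is measured by $\delta(e^{sz})$, which vanishes by the Maurer-Cartan equation, we conclude $(\delta^z)^2=0$. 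Invoking Proposition \ref{propoto} once more identifies $\delta^z$ with the $L_\infty$ structure $\{\ell_k^z\}$.

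The main technical obstacle, if one tried instead to verify the generalized Jacobi identities for $\{\ell_k^z\}$ directly, would be the combinatorics of shuffles interacting with the Koszul signs: for each arity $n$ one must extract Maurer-Cartan contributions of all orders and reassemble them compatibly. Packaging the computation through the group-like $e^{sz}$ on $\Lambda^+ sL$ collapses the whole family of identities into the single equation $\delta(e^{sz})=0$, so the real work is concentrated in checking the cofreeness bookkeeping and the sign conventions, rather than in any deep structural fact.
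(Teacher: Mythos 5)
The paper does not actually prove this proposition: it is quoted verbatim from Getzler \cite[Prop.~4.4]{getzler}, whose own argument is closer to a direct verification that the generalized Jacobi identities for $\{\ell_k^z\}$ hold modulo terms involving the curvature $\mathcal{F}(z)$. So there is no in-paper proof to compare against line by line; judged on its own, your coalgebraic argument is correct and is the standard conceptual route. The point you get exactly right is the division of labour: conjugating $\delta$ by the translation automorphism $T(a)=e^{sz}\wedge a$ gives $(T^{-1}\delta T)^2=0$ for free, and the Maurer--Cartan equation is needed only to kill the would-be curvature term $T^{-1}\delta T(1)=e^{-sz}\wedge\delta(e^{sz})$, i.e.\ to make the conjugated coderivation descend to the non-unital coalgebra $\Lambda^+ sL$; without it one obtains a \emph{curved} $L_\infty$ structure with $\ell_0^z=\mathcal{F}(z)$, for which the identities of Definition \ref{linfinitodef} genuinely fail. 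Two details deserve more care than you give them. First, $e^{sz}$ and $T$ live only in the completion $\widehat{\Lambda sL}$, and $\delta$ does not preserve the word-length filtration, so its extension to the completed subcoalgebra spanned by the translates $e^{sz}\wedge \Lambda sL$ must itself be justified by nilpotency (the same finiteness that makes each $\ell_k^z$ well defined); this is part of the ``bookkeeping'' you defer, but it is where the nilpotency hypothesis is actually consumed. Second, the phrase ``determined by its projection onto primitives'' should be ``primitives relative to the group-like element'': since $\Delta(\delta e^{sz})=\delta e^{sz}\otimes e^{sz}+e^{sz}\otimes\delta e^{sz}$, one has $\delta(e^{sz})=v\wedge e^{sz}$ with $v\in sL$, and then $v=\xi\delta(e^{sz})=\pm s\mathcal{F}(z)$, which is what makes the equivalence $\delta(e^{sz})=0\Leftrightarrow\mathcal{F}(z)=0$ legitimate. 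With those two points filled in (and the $j$-dependent Koszul signs checked against Remark \ref{llavelinf}, which is routine because $|sz|=0$), the proof is complete.
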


The study of Maurer-Cartan elements is at the basis of a recent and independent work of A. Berglund \cite{Berg}.

\vspace{2mm}\begin{example}
Let $X$ be a simply connected CW-complex with rational cohomology given by
$$H^*(X;\mathbb Q) \cong \Lambda (y, x_1, x_2, \ldots ) / (x_i^2-y^{2i})\,, \hspace{15mm} \vert x_i\vert = 2i\,, \vert y\vert = 2\,.$$
Since the sequence given by the $x_i^2-y^{2i}$ is a regular sequence, an $L_\infty$ model for $X$ is given by  the graded vector space generated by the elements $z, u_1, u_2, \ldots, y_1, y_2, \ldots $, with $\vert z\vert = 1, \vert u_i\vert = 4i-2$ and $\vert y_i\vert = 2i-1$. The only nonzero brackets are
$$-[ \underbrace{z
,\dots ,z}_{2i \text{ times}}] = [y_i,y_i]= u_i\,.$$
\end{example}

\section{$L_\infty$ models for the components of free and pointed mapping spaces}

We begin by applying the construction at the end of last section to a particular situation which will play an essential role
in the sequel. For it  observe that, if $L$ is an $L_\infty $ algebra and $C$ is a
cocommutative differential graded coalgebra primitively
cogenerated, then the complex of linear maps $Hom(C,L)$
(respectively $Hom({\overline C},L)$)  with
brackets

$$\left\lbrace \begin{array}{ l }
              \ell_1(f)=[f]=\ell_1\circ f+(-1)^{|f|+1}f\circ \delta \\
              \ell_k(f_1,\dots,f_k)=\left[f_1,\dots,f_k\right]=\ell_k \circ (f_1\otimes
              \cdots \otimes f_k) \circ \Delta^{(k-1)}  \ \ k\geq
              2,
           \end{array}
         \right.
            $$
(respectively with reduced codiagonal $\overline{\Delta
            }^{(k-1)}$) is an $L_\infty$ algebra.

On the other hand, each morphism  of differential graded coalgebras $\phi \in \text{CDGC}(C,\mathcal{C}_\infty (L))$  induces by restriction a Maurer-Cartan element in ${Hom}_{-1}(C,L)$ which we shall denote equally by $\phi$.
This gives a new $L_\infty $ structure on ${Hom}(C,L)$ with brackets
 $\ell^\phi_k=[\,,\dots ,]_\phi $.

\medskip

Next, we prove Theorems \ref{principallibre} and \ref{principalbasado}.
Let $\phi \colon (\Lambda V,d)\to B$  be a \alg
model of $f\colon X\to Y$ in which $(\Lambda V,
d)$ is the minimal Sullivan model of  $Y$ and $B$ is a connected finite dimensional \alg model of the finite CW-complex $X$.  Thus, if $L=s^{-1}V^\sharp$, then $\phi^\sharp\colon B^\sharp\to \mathcal{C}_\infty (L)$ is a CDGC morphism modeling $f$ which, by the preceding paragraphs, induces an $L_\infty$ structure on
 $Hom(B^\sharp ,L)$ with  brackets $[\,,
\dots ,]_\lineal $. Note that here $\overline{B^\sharp}=B^\sharp_+$.
 Hence,  Theorems \ref{principallibre} and \ref{principalbasado} are reformulated as:

 \begin{theorem} \label{modelotanre}
\mbox{}

\begin{enumerate}
\item[(i)]  $(\mathcal{H}om(B^\sharp,L), [\,,\dots ,]_\lineal )$  is
 an $L_\infty $
 model of {\em $\map_f(X,Y)$}.
\item[(ii)]
  $(\mathcal{H}om(\kerB ,L), [\,,\dots ,]_\lineal )$  is
 an $L_\infty $
 model of   {\em $\map_f^*(X,Y)$}.
\end{enumerate}
 \end{theorem}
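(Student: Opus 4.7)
The plan is to establish the theorem by dualizing the $L_\infty$ structure on $\mathcal{H}om(B^\sharp,L)$ via the correspondence of Remark \ref{llavelinf}, and then identifying the resulting CDGA with the classical Haefliger/Brown--Szczarba Sullivan model of $\map_f(X,Y)$.

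First I would verify the finiteness hypotheses needed to invoke Remark \ref{llavelinf}: the space $V$ is of finite type, since $(\Lambda V,d)$ is the minimal Sullivan model of the finite type complex $Y$; hence $L=s^{-1}V^\sharp$ is of finite type in each degree, and since $B$ is finite dimensional, so is $Hom(B^\sharp,L)\cong L\otimes B$. The remark then produces a CDGA $\cinf(\mathcal{H}om(B^\sharp,L))=(\Lambda W,d)$ whose underlying space of generators $W$ is, up to the non-negative truncation imposed by $\mathcal{H}om$, canonically identified with $V\otimes B^\sharp$.

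Next I would compute the CDGA differential. Unwinding the duality formula of Remark \ref{llavelinf} applied to the brackets
\[
[\varphi_1,\dots,\varphi_k]_\lineal=\sum_{j\ge 0}\frac{1}{j!}[\lineal^{\wedge j},\varphi_1,\dots,\varphi_k],
\]
each summand assembles an original bracket $\ell_{j+k}$ on $L$ with an iterated coproduct $\Delta^{(j+k-1)}$ of $B^\sharp$. Under the identifications $(sL)^\sharp=V$ and $(B^\sharp)^{\otimes k}=(B^{\otimes k})^\sharp$, the brackets $\ell_k$ dualize to the components $d_k$ of the Sullivan differential on $\Lambda V$, while the iterated coproducts dualize to iterated multiplication on $B$. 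The resulting differential on $\Lambda(V\otimes B^\sharp)$ is precisely the standard Haefliger differential twisted by $\phi$, so $\cinf(\mathcal{H}om(B^\sharp,L))$ agrees with the Brown--Szczarba model of the component $\map_f(X,Y)$. The passage from $Hom$ to $\mathcal{H}om$ corresponds dually to the usual non-negative truncation in the Haefliger construction, which is the source of the Sullivan model of a single path component.

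For part (ii), replacing $B^\sharp$ and $\Delta$ by $\kerB$ and $\overline\Delta$ corresponds dually to discarding the $1\in B$ factor that encodes values on the basepoint, yielding the Sullivan model of the pointed mapping space $\map_f^*(X,Y)$. The main obstacle throughout is sign and index bookkeeping: one must verify that the Koszul signs from Remark \ref{llavelinf}, the shuffle signs in the generalized Jacobi identity, and the signs in the perturbation formula align correctly so as to reproduce the twisted Haefliger differential on the nose. Once this dictionary is pinned down, the classical identification of the Brown--Szczarba model with the Sullivan model of $\map_f(X,Y)$ (and its pointed analogue) closes both parts of the argument.
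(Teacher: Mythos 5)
Your strategy---dualize the $L_\infty$ structure on $\mathcal{H}om(B^\sharp,L)$ via Remark \ref{llavelinf} and identify the resulting Sullivan algebra with the Haefliger/Brown--Szczarba model of the component---is sound and lands in the same place as the paper, but it is organized differently. The paper interposes the complex of $\phi$-derivations: Lemma \ref{derivaciones} first shows that $s^{-1}\mathcal{D}er_\phi(\Lambda V,B)$, with the length-$j$ brackets of \cite[Def.\ 14]{urtzi2}, is an $L_\infty$ model of $\map_f(X,Y)$, precisely because \cite[Theorems 5 and 15]{urtzi2} already identify the generators $W$ of the Brown--Szczarba model with the dual of $\mathcal{D}er_\phi(\Lambda V,B)$ and the components $d_j$ of its differential with those brackets; Theorem \ref{modelotanre} is then reduced to exhibiting an explicit $L_\infty$ isomorphism (via $\Theta$ and $\nabla$) between $s^{-1}\mathcal{D}er_\phi(\Lambda V,B)$ and $\bigl(\mathcal{H}om(B^\sharp,L),[\,,\dots,]_{\phi^\sharp}\bigr)$. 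The one genuinely delicate point in that comparison is that the \emph{unperturbed} differential $\delta$ on $\phi$-derivations corresponds to the \emph{perturbed} $\ell_1^{\phi^\sharp}$ on $\mathcal{H}om(B^\sharp,L)$: the term $\beta\bigl(\varphi(dv)\bigr)$ arising from the $\phi$-derivation property is exactly the Maurer--Cartan correction $\sum_{\ell\ge 1}\frac{1}{\ell!}[{\phi^\sharp}^{\wedge\ell},f]$. Your direct route must re-derive, for $Hom(B^\sharp,L)$ itself, the formula matching $d_j$ with the dual of the perturbed brackets; that is precisely the computation you defer as ``sign and index bookkeeping,'' and it is not a formality but the substantive content of \cite[Theorem 15]{urtzi2} together with the verifications $(2)$ and $(3)=(4)$ in the paper. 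In particular, you should make explicit that the twist of the Haefliger differential by $\phi$ is produced by the perturbation terms $\frac{1}{j!}[{\phi^\sharp}^{\wedge j},-,\dots,-]$ and by nothing else; with that verification supplied, parts (i) and (ii) follow as you describe, the pointed case by the same replacement of $\Delta$ with $\overline{\Delta}$.
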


\begin{remark}\label{casolie}
Observe that if $L$ is a \lie then we recover the DGL model given in
\cite[Theorem 10]{urtzi3} or \cite[Theorem 2]{urtzi4} for $\mathcal{H}om(\kerB ,L)$:
The bracket is given by
 $[f,g]= [\,,\,]\circ (f\otimes g)\circ\Delta$, and the differential by $D(f) = df + (-1)^{\vert f\vert} fd + [\phi, f]$.
\end{remark}

We first need to consider $(Der_\phi (\Lambda V,
B), \delta )$ the differential graded vector space of $\phi$-derivations where $Der_\phi
(\Lambda V,B)_n $ are  linear
maps $\theta \colon (\Lambda V)^*\to B^{*-n}$ for which $\theta (vw)=\theta
(v)\phi (w)+(-1)^{n|w|}\phi (v)\theta (w)$. The differential is
defined as usual $\delta \theta =d\circ \theta +(-1)^{n+1}\theta
\circ d$. Note that as graded vector spaces $Der_\phi (\Lambda V,
B)\cong Hom (V, B)$  via the identification
$\theta \mapsto \theta_{|V}$.  Then, we have an isomorphism:
$$\begin{aligned}
&\Theta \colon Der _\phi (\Lambda V, B)\stackrel{\cong
}{\longrightarrow} Hom (V\otimes B^\sharp, \cu ),\\
& \Theta
(\theta)(v\otimes \beta )=(-1)^{|\beta |(|v|+|\theta |)}\beta
(\theta (v)).
\end{aligned}$$

Consider the positive $\phi $-derivations defined by
$$\der_\phi (\Lambda V, B)_i= \left\lbrace
           \begin{array}{c l}
              Der_\phi (\Lambda V, B)_i& \text{for $i>1$},\\
              \mathcal{Z}Der_\phi (\Lambda V, B)_1 & \text{for $i=1$}.
           \end{array}
         \right.$$

Next, given $j\ge 2$ and  $\varphi _1,\dots ,\varphi _j\in \der _\phi (\Lambda V, B)$,
of degrees $p_1,\dots ,p_j$, we define their {\em bracket of length $j$}, $[\varphi_1,\dots
,\varphi_j]\in \der _\phi (\Lambda V, B)$, as in \cite[Def.14]{urtzi2} by
$$[\varphi_1{\scriptstyle,\dots,}\varphi_j](v)=(-1)^{p_1+\cdots+p_j-1}\sum ( \sum_{i_1,... ,i_j}
\varepsilon \phi (v_1{\scriptstyle\dots}\widehat{v}_{i_1}{\scriptstyle\dots}
\widehat{v}_{i_j}{\scriptstyle\dots}v_k)\varphi_1 (v_{i_1}){\scriptstyle\dots}
\varphi_j(v_{i_j}))$$ where $dv=\sum v_1\cdots v_k$ and $\varepsilon $
is the suitable sign given by the Koszul convention.

We may ``desuspend" these operations to define a set of linear maps $\{\ell_j\}_{j\ge 1}$, each of which of degree $j-2$, on
$s^{-1}\der_\phi (\Lambda V, B)$ as follows:

\smallskip

 For $j=1$,
 $$
 \ell_1\colon s^{-1}\der_\phi (\Lambda V, B)\to s^{-1}\der_\phi (\Lambda V, B)
 $$
  is the differential: $\ell_1(s^{-1}\varphi)=\delta s^{-1}\varphi
=-s^{-1}\delta \varphi$.

\smallskip

For $j\ge 2$ define
$$\ell_j\colon s^{-1}\der_\phi (\Lambda V, B)\otimes \cdots \otimes s^{-1}\der_\phi (\Lambda V,
B)\to s^{-1}\der_\phi (\Lambda V, B),$$
$$
\ell_j(s^{-1}\varphi_1,\dots  ,s^{-1}\varphi_j )=(-1)^\alpha
s^{-1}[\varphi _1,\dots ,\varphi_j ],
$$
where $\alpha
=\sum_{n=1}^{j-1}(j-n)|\varphi_n|$.

\smallskip

Note that these operations restrict to $s^{-1}\der_\phi (\Lambda V, \kerb )$.
Then we have:

\begin{lemma}\label{derivaciones}
$(s^{-1}\der_\phi (\Lambda V, B),\{\ell_j\}_{j\ge1})$ and $(s^{-1}\der_\phi (\Lambda V, B_+),\{\ell_j\}_{j\ge1})$ are $L_\infty $
models of $\text{\em map}_f(X,Y)$ and
$\text{\em map}_f^*(X,Y)$ respectively.
\end{lemma}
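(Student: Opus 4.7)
The plan is to identify $\cinf(s^{-1}\der_\phi(\Lambda V, B))$ with the classical Haefliger--Sullivan model of $\map_f(X,Y)$ (and analogously in the pointed case), so that the $L_\infty$ model property becomes a consequence of Haefliger's theorem \cite{Ha}.

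First, use the isomorphism $\Theta\colon \der_\phi(\Lambda V, B)\stackrel{\cong}{\to} Hom(V\otimes B^\sharp,\cu) = (V\otimes B^\sharp)^\sharp$, valid because $B$ is finite dimensional and $V$ is of finite type, to identify $sL = \der_\phi(\Lambda V, B)$ with (a truncation of) $(V\otimes B^\sharp)^\sharp$. Dualizing gives a natural isomorphism of free graded commutative algebras between $\cinf(L)$ and the non-negative truncation (with $0$-cocycles in degree zero) of $\Lambda(V\otimes B^\sharp)$. The positive-degree restriction imposed by the definition of $\der_\phi$ is precisely the truncation needed to model the component of $f$ rather than the whole mapping space.

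Second, transport the brackets $\{\ell_j\}_{j\ge 1}$ across $\Theta$ and dualize to obtain a derivation of $\Lambda(V\otimes B^\sharp)$. The shape of the bracket of length $j$,
$$[\varphi_1,\dots,\varphi_j](v) = (-1)^{p_1+\cdots+p_j-1}\sum\varepsilon\,\phi(v_1\cdots\widehat v_{i_1}\cdots\widehat v_{i_j}\cdots v_k)\varphi_1(v_{i_1})\cdots\varphi_j(v_{i_j}),$$
reads off the $j$-factor extractions in the decomposition $dv=\sum v_1\cdots v_k$, with $j$ slots evaluated by the $\varphi_i$'s and the remaining ones by $\phi$. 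This is exactly the pattern of the Bousfield--Gugenheim differential on $\Lambda(V\otimes B^\sharp)$ twisted by $\phi$, whose non-negative truncation at the component of $f$ is Haefliger's Sullivan model of $\map_f(X,Y)$. Hence $\cinf(L)$ is identified with that Sullivan model, giving simultaneously (a) the $L_\infty$ axioms for $(s^{-1}\der_\phi(\Lambda V, B),\{\ell_j\})$ via Proposition \ref{propoto}, since the dual differential squares to zero by Haefliger, and (b) the quasi-isomorphism to $A_{PL}(\map_f(X,Y))$. The pointed case is the same, with $B$ replaced by $\kerb = B_+$ throughout and Haefliger's pointed model invoked instead.

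The main obstacle will be the precise sign and truncation bookkeeping: one must verify that the factor $(-1)^\alpha$ built into the $\ell_j$, together with the Koszul signs implicit in $\Theta$ and in dualization, conspire to produce exactly Haefliger's differential on the nose; and one must check that the cycle condition imposed on $\der_\phi$ in degree $1$ (together with full positivity in higher degrees) matches the Sullivan truncation required to pick out the component of $f$ rather than a larger or smaller piece of the function space.
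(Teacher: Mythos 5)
Your proposal follows essentially the same route as the paper: both identify $\cinf\bigl(s^{-1}\der_\phi(\Lambda V,B)\bigr)$ with the Haefliger--Brown--Szczarba Sullivan model of $\map_f(X,Y)$ via the duality $\Theta$, matching generators (with the degree-one cycle condition accounting for the truncation to the component of $f$) and matching the dualized brackets with the differential of that model. The sign and truncation bookkeeping you flag as the main obstacle is exactly what the paper delegates to \cite[Theorems 5 and 15]{urtzi2}, so nothing further is missing.
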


\begin{proof} By  Definition \ref{linfinito}(1) we have to prove that
 $\cinfinito s^{-1}\der_\phi (\Lambda V, B)$ is a Sullivan model of  $\text{map}_f(X,Y)$ (the pointed case is done similarly).
 For it let $(\Lambda W,d)$ be the Haefliger-Brown-Szczarba model of $\text{map}_f(X,Y)$ \cite{browncharba,urtzi2,Ha}. Then, in \cite[Theorem  5]{urtzi2} it is shown on one hand that  $W$ and $\der_\phi (\Lambda V, B)$ are dual vector spaces. In fact $W\subset V\otimes B^\sharp$ is precisely the dual of the image of the isomorphism $\Theta$ defined above restricted  to $\der_\phi (\Lambda V, B)$. In other words, $W^p=(V\otimes B^\sharp)^p$ for $p\ge 2$ and $W^1$ is a (possibly proper) subspace of $(V\otimes B^\sharp)^1$.  On the other hand, in \cite[Theorem  15]{urtzi2}, it is also established that, for any $j\ge 1$, $w\in W$ and $\varphi_1,\dots ,\varphi_j\in \der_\phi (\Lambda V, B)$,
 $$
 \langle \dalg _jw;\varphi_1\wedge \dots \wedge \varphi_j \rangle
=(-1)^{|w|+1} \Theta\bigl(s\ell_j(s^{-1}\varphi_1,\dots  ,s^{-1}\varphi_j ) \bigr)(w)\eqno{(1)}
$$
 With the notation of
 Remark \ref{llavelinf},  this is  $(-1)^\epsilon \langle w;s\ell_j(s^{-1}\varphi_1,\dots  ,s^{-1}\varphi_j )\rangle=(-1)^\epsilon \langle w;s[s^{-1}\varphi_1,\dots  ,s^{-1}\varphi_j ]\rangle$ and thus, via precisely this remark, the lemma is established.
\end{proof}

Note that, for the particular situation of $(\Lambda V, d=d_\pl+d_\pc )$,
Lemma \ref{derivaciones} is just \cite[Theorem 9]{urtzi3} in the case of
the trivial fibration.

\begin{proof}[Proof of Theorem \ref{modelotanre}]
 We give the proof for the free mapping space as the pointed case is done similarly. In view of Lemma \ref{derivaciones} it is enough to prove that $s^{-1}\der_\phi (\Lambda V, B
)$ and $\mathcal{H}om(B^\sharp ,L)$ are isomorphic $L_\infty $
algebras. First note that, as graded vector spaces, $s^{-1}Der_\phi
(\Lambda V, B )$ is isomorphic to $ Hom(B^\sharp ,L)$ via
$$
\begin{aligned}&s^{-1}Der_\phi (\Lambda V, B )_n=Der_\phi (\Lambda V, B
)_{n+1}\stackrel{\Theta }{\longrightarrow }Hom _{n+1}(V\otimes B^\sharp,\cu)\\
&\stackrel{\nabla }{\longleftarrow }Hom_{n+1}(B^\sharp , sL)=Hom_n(B^\sharp ,L)
\end{aligned}$$
in which $\nabla$ is the canonical isomorphism $\nabla \efe
(v\otimes \beta )=(-1)^{|v||\efe |}\langle v;\efe (\beta )\rangle$. For the pointed case it is easily
checked that this restricts to an isomorphism $s^{-1}\der_\phi
(\Lambda V, \kerb )\cong \mathcal{H}om(\kerB ,L)$.

We now show that this  isomorphism is compatible with the $\ell_j$'s for any $j\ge 1$.
For $j\ge 2$ this reduces  to the formal and straightforward verification that    given $\varphi_1,\dots,\varphi_j\in\der_\phi
(\Lambda V, B )$ and $f_1,\dots,f_j\in \mathcal{H}om(B^\sharp ,L)$ such that $\Theta(\varphi_i)=\nabla(f_i)$ for $1\le i\le j$, then
$$
\Theta [\varphi_1,\dots ,\varphi_j]=\nabla [\efe_1,\dots
,\efe_j]_\lineal .\eqno{(2)}
$$

We finish by showing that our isomorphism is also compatible with the differential $\ell_1$. Again, this reduces to show that given $\varphi\in\der_\phi
(\Lambda V, B )$ and $f\in \mathcal{H}om(B^\sharp ,L)$ such that $\Theta(\varphi)=\nabla(f)$,  then $\Theta \delta \varphi =\nabla
[\efe ]_\lineal $.  For it, let $v\otimes \beta\in W\subset V\otimes B^\sharp$. With the notation in Lemma \ref{derivaciones}, and via formula $(1)$, one has\footnote{For the sake of clarity,
 we shall omit signs in what follows and write just $\pm $. However a
careful application of the Koszul convention leads to proper sign
adjustments.}

$$
\Theta(\delta\varphi)(v\otimes\beta)=\pm\langle d_1(v\otimes\beta);\varphi\rangle=\Theta(\varphi)\bigl( d_1(v\otimes\beta)\bigr).
$$
However, in \cite[Lemma 6]{urtzi2}, it is shown that $d_1(v\otimes\beta)=\Gamma+(-1)^{|v|}v\otimes\delta\beta$ with $\Gamma\in V\otimes B^\sharp$ satisfying $\Theta(\varphi)(\Gamma)=\pm\beta\bigl(\varphi(dv)\bigr)$. Hence,
$$
\begin{aligned}
\Theta(\delta\varphi)(v\otimes\beta)&=\pm\beta\bigl(\varphi(dv)\bigr)\pm \Theta(\varphi)(v\otimes\delta\beta)\\
&=\pm\beta\bigl(\varphi(dv)\bigr)\pm \nabla(f)(v\otimes\delta\beta)=\pm\beta\bigl(\varphi(dv)\bigr)\pm \langle v;f(\delta\beta)\rangle.\quad(3)
\end{aligned}
$$

On the other hand $\nabla[f]_\lineal(v\otimes\beta)=\pm\langle v;[f]_\lineal(\beta)\rangle$. However,
$$
[f]_\lineal(\beta)=[f](\beta)+\sum_{\ell =1}^\infty \frac{1}{\ell !}[{\phi^\sharp}^{\wedge \ell }, f]=(-1)^{|f|} f(\delta\beta)+\sum_{\ell =1}^\infty \frac{1}{\ell !}[{\phi^\sharp}^{\wedge \ell }, f],
$$
and thus,
$$
\nabla[f]_\lineal(v\otimes\beta)=\pm\langle v;f(\delta\beta)\rangle\pm \sum_{\ell =1}^\infty \frac{1}{\ell !}\langle v;[{\phi^\sharp}^{\wedge \ell }, f](\beta)\rangle.\eqno{(4)}
$$
Therefore, to assure that $(3)=(4)$ it remains to prove that
$$\pm\beta\bigl(\varphi(dv)\bigr)=\pm \sum_{\ell =1}^\infty \frac{1}{\ell !}\langle v;[{{\phi^\sharp}}^{\wedge \ell }, f](\beta)\rangle.$$
This is the same  straightforward formal computation as in $(2)$ for $j=1$.

\end{proof}

We continue with:

\begin{proof}[Proof of Theorem \ref{casonofinito}]

We shall consider now  $f\colon X\to Y$   a pointed map between  rational CW-complexes of finite type ($X$ non necessarily finite!)
and denote by $f_n\colon X_n\to Y$ the restriction of $f$ to the $n$-skeleton of $X$.  Let $\mathcal{L}$ be a \lie model of $Y$ and choose, for each $n\ge 1$, an injective coalgebra model $i_n\colon C_n\hookrightarrow C_{n+1}$  of the inclusion $j_n\colon X_n\hookrightarrow X_{n+1}$ so that $C=\limite C_n$ is a finite type coalgebra model of $X$. Choose also $\phi_n\colon C_n\to \mathcal{C}(\mathcal{L})$ a \coa model of $f_n$ so that $\phi_{n+1}i_n=\phi_n$. In this case Theorem \ref{principallibre}, as observed in Remark \ref{casolie} above, asserts that, for each $n$, $\mathcal{H}om(C_n,\mathcal{L})$ with the differential given by $\partial f=\delta f+[\phi_n,f]$ and the $\ell_2$ bracket (note that $\ell_2=\ell_2^\phi$ as $\ell_k=0$ for $k\ge 3$) , is a \lie model of $\map_{f_n}(X_n,Y)$.  Moreover, $i_n^*\colon\mathcal{H}om(C_{n+1},\mathcal{L})\twoheadrightarrow \mathcal{H}om(C_n,\mathcal{L})$ is a surjective model of $j_n^*\colon \map_{f_{n+1}}(X_{n+1},Y)\to \map_{f_n}(X_n,Y)$. As $\limite \mathcal{H}om(C_n,\mathcal{L})=\mathcal{H}om(C,\mathcal{L})$ and $\limite \map_{f_n}(X_n,Y)=\map_{f}(X,Y)$, apply  \cite[Lemma 5]{urtzi4} to conclude that the universal cover of $\mathcal{H}om(C,\mathcal{L})$ is a \lie model of the universal cover of $\map_{f}(X,Y)$. At this point, it remains to show that if $L$ is an $L_\infty$ model of $Y$ then $\mathcal{H}om(C,\mathcal{L})$ and $(\mathcal{H}om(C,L), \{\ell^\phi_k\}_{k\ge 1} )$ are quasi-isomorphic as $L_\infty$ algebras. To this end apply the more general fact in the lemma below. Again, the pointed case is done mutatis mutandis replacing $C$ and $\Delta$ by $\overline C$ and $\overline\Delta$.
\end{proof}

Let $\gamma\colon \mathcal{C}_\infty(L)\to \mathcal{C}_\infty(L')$ be an $L_\infty$ morphism and let  $\phi\colon C\to \mathcal{C}_\infty(L)$ be a morphism of \coa, i.e., a Maurer-Cartan element of $Hom(C,L)$. We denote by
$$
\Gamma\colon (Hom(C,L), \{\ell^\phi_k\}_{k\ge 1} )\longrightarrow (Hom(C,L'), \{\ell^{\gamma\phi}_k\}_{k\ge 1} )
$$
the induced $L_\infty$ morphism for which
$$\Gamma^{(1)}\colon (Hom(C,L),\ell_1)\to  (Hom(C,L'),\ell_1^{\gamma\phi})$$ is precisely $Hom(C,\gamma^{(1)})$.

\begin{lemma}\label{lemafinal}
If $\gamma$ is a quasi-isomorphism of $L_\infty$ algebras, $C$ is of finite type and $C,L,L'$ are bounded below, then $\Gamma$
is also a quasi-isomorphism of $L_\infty$ algebras.
\end{lemma}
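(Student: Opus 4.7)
To prove $\Gamma$ is an $L_\infty$ quasi-isomorphism it suffices, by definition, to show its linear component $\Gamma^{(1)}=Hom(C,\gamma^{(1)})$ is a quasi-isomorphism of chain complexes between $(Hom(C,L),\ell_1^\phi)$ and $(Hom(C,L'),\ell_1^{\gamma\phi})$. The plan is to introduce a filtration whose associated graded erases the perturbation, reducing the question to the unperturbed case, and then to invoke a spectral sequence convergence argument.

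First, I would consider on both sides the decreasing filtration $F^p=\{f:f|_{C_{<p}}=0\}$, which is manifestly preserved by $\Gamma^{(1)}$. A direct Sweedler-type calculation, using that $C$ is bounded below, shows that each summand $[\phi^{\wedge j},-]$ with $j\ge 1$ of the perturbation strictly raises filtration degree: for $f\in F^p$ and $c\in C_{<p+j}$, in every relevant component $c_1\otimes\cdots\otimes c_{j+1}$ of $\Delta^{(j)}(c)$ the factor onto which $f$ is applied is forced into $C_{<p}$ and so killed. Consequently $\ell_1$ and $\ell_1^\phi$ agree on the associated graded, which in filtration degree $p$ is $Hom(C_p,L)$ with its canonical differential coming from $\ell_1^L$. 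Since $C_p$ is finite dimensional by the finite-type hypothesis, $Hom(C_p,\gamma^{(1)})$ is a finite direct sum of shifted copies of the quasi-isomorphism $\gamma^{(1)}$ between bounded-below complexes, hence itself a quasi-isomorphism.

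It remains to pass from a quasi-isomorphism on the associated graded to one on the filtered complexes. The bounded-below and finite-type hypotheses ensure that $F^\bullet$ is exhaustive ($F^p=Hom(C,L)$ for $p$ below the lower bound of $C$), Hausdorff ($\bigcap_p F^p=0$, since a map vanishing on each $C_{<p}$ is zero), and complete ($Hom(C,L)\cong\varprojlim_p Hom(C,L)/F^p$). The main technical obstacle will be the convergence of the associated spectral sequence, since $Hom(C,L)$ is a genuine infinite product in each homological degree; nevertheless the standard convergence theorem for complete Hausdorff decreasing filtrations (as in Boardman's formulation) applies and yields that $\Gamma^{(1)}$ is a quasi-isomorphism, concluding the proof.
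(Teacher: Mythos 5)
Your proof is correct and follows essentially the same route as the paper: your filtration $F^p=\{f : f|_{C_{<p}}=0\}$ is exactly the paper's filtration ${C^\sharp}^{\ge p}\otimes L$ under the finite-type identification $Hom(C,L)\cong C^\sharp\otimes L$, the associated-graded computation (both the $C$-differential and the Maurer--Cartan perturbation strictly raise filtration, leaving $Hom(C_p,\gamma^{(1)})$) is the same, and your appeal to the comparison theorem for complete Hausdorff exhaustive filtrations is what the paper compresses into ``our bounding assumption let us use comparison.'' If anything, you are more explicit than the paper about why convergence is a genuine issue (the filtration is infinite in each total degree) and which convergence theorem resolves it.
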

\begin{proof}
As $C$ if of finite type, a short computation shows that $(Hom(C,L), \ell^\phi_1 )$ is naturally isomorphic to $(C^\sharp\otimes L,d)$ in which $d(a\otimes b)=d a\otimes b+(-1)^{|a|}a\otimes\ell_1(b)+\Phi$ with $\Phi\in {C^\sharp}^{>|a|}\otimes L$. The same holds in $(Hom(C,L'), \ell^{\gamma\phi}_1)$. Then, filtering ${C^\sharp}\otimes L$  (resp. ${C^\sharp}\otimes L'$) by ${C^\sharp}^{\ge p}\otimes L$ (resp. ${C^\sharp}^{\ge p}\otimes L'$),
 $\Gamma^{(1)}=Hom(C,\gamma^{(1)})\colon (Hom(C,L), \ell^\phi_1 )\longrightarrow (Hom(C,L'), \ell^{\gamma\phi}_1)$ respects these filtrations and the resulting morphism of  spectral sequences at the $E_1$ level is $1_{C^\sharp}\otimes \gamma^{(1)}$ with split differentials. This is a quasi-isomorphism as $\gamma^{(1)}$ is. Our bounding assumption let us use comparison and thus, the lemma holds.
\end{proof}

\begin{example} We consider the injection $\varphi$ of $\mathbb CP^n $ into the first component of $ Y =\mathbb CP^{n+1}\#\,\, \mathbb CP^{n+1}$. A CDGC model for $\mathbb CP^n$ is given by the graded vector space $C$  generated by the elements  $1 = u_0, u_1, \ldots , u_n$, $\vert u_i\vert = 2i$, $\Delta (u_r) = \sum_{i+j=r} u_i\otimes u_j$.  An $L_\infty$ model for $Y$ is given by  the vector space $L$ with basis $a, b$ $c$, $v$, $\vert a\vert =\vert b\vert = 1$, $\vert c\vert = 2$, $\vert v\vert = 2n$ with nonzero brackets
$$[a,b]= c\,, \hspace{5mm} [\underbrace{a,\ldots ,a}_{n+1}] = [\underbrace{b,\ldots ,b}_{n+1}] = v\,.$$

A basis for the vector space $Hom_{\geq 0}(C,L)$ is given by the elements $f_0$, $ f_1$, $\ldots $, $f_n$, $g$, $h, k$ and $r$:
$$\begin{aligned} &f_i(u_i) = v,& f_i(u_j) = 0 \quad\mbox{for $i\neq j$},\quad& \vert f_i\vert = 2(n-i),\\
 &g(1) = a,& g(u_i) = 0\quad \mbox{ for $ i>0$},\quad&\vert g\vert = 1,\\
 &h (1) = b, & h (u_i) = 0 \quad \mbox{ for $ i>0$}, \quad&\vert h \vert = 1,\\
 &k(1) = c,&k(u_i)= 0\quad \mbox{ for $ i>0$}, \quad&\vert k\vert = 2,\\
& r(u_1) = c, & r(u_i) = 0 \quad \mbox{ for $ i\neq 1$},\quad& \vert r\vert = 0.\end{aligned}$$

We observe that $[h] = r$ and $[g] = \frac{1}{n!}f_n$. Therefore an $L_\infty$ model for  $\map_\varphi^*(\mathbb CP^n, Y)$ is given by the graded vector space generated by the elements $f_1, \ldots , f_n$ and $r$, all brackets being zero. On the other hand an $L_\infty$ model for $\map_\varphi (\mathbb CP^n, Y)$ is given by the vector space generated by the elements $f_0, f_1, \ldots , f_{n-1}$ and $k$, with all zero brackets.
  Both spaces have   the rational homotopy type of a product of odd dimensional spheres.
Denote by $Z$ the homotopy fiber of the map $f \colon  Y \to K(Z,2)\times K(Z,2)$ corresponding to the two generators of $H^2(Y)$. Then, $Z$ has the rational homotopy type of $S^3 \times S^{2n+1}$ and the injection $Z\to Y$ induces a rational homotopy equivalence
$\map^*(\mathbb CP^n, Y)\cong \map^*(\mathbb CP^n, Z)$.
\end{example}

As a final remark we observe that in general, the $L_\infty$ models obtained in this section are not minimal even if $L$ is, as the differential operator $\ell_1$ of these models may not be trivial. Nevertheless, there is a special case in which this occurs:

\begin{proposition}\label{modelominimal} If $X$ is formal, and for the constant map $X\to Y$, the $L_\infty$ models of Theorems \ref{principallibre}, \ref{principalbasado} and \ref{casonofinito} are minimal if $L$ is.
\end{proposition}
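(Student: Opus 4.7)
The plan is a direct verification that $\ell_1^\phi = 0$ on each of the four $L_\infty$ algebras $\mathcal{H}om(C,L)$, $\mathcal{H}om(\overline C,L)$, $\widetilde{\mathcal{H}om}(C,L)$, $\widetilde{\mathcal{H}om}(\overline C,L)$, which by Definition \ref{minimal} is precisely minimality. I would exploit the two hypotheses in parallel. Formality of $X$ lets me choose as CDGC model the homology coalgebra $C = H_*(X;\mathbb Q)$ equipped with zero differential $\delta_C = 0$; in the finite-type setting this is just the dual of a zero-differential CDGA model. The constant map $f\colon X\to Y$ admits as CDGC model the composite $\phi\colon C \xrightarrow{\varepsilon} \mathbb Q \hookrightarrow \mathcal{C}_\infty(L)$, which vanishes on $\overline C$. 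Projecting $\phi$ onto the linear part $sL \subset \Lambda sL$ and desuspending therefore produces the zero Maurer-Cartan element in $Hom_{-1}(C,L)$.

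With $\phi = 0$ the perturbation formula
\[
\ell_k^\phi(f_1,\dots,f_k) = \sum_{j \ge 0}\frac{1}{j!}\,[\underbrace{\phi,\dots,\phi}_{j},f_1,\dots,f_k]
\]
collapses to $\ell_k^\phi = \ell_k$ for every $k \ge 1$; in particular $\ell_1^\phi = \ell_1$. But by the formula given at the start of \S3,
\[
\ell_1(g) \;=\; \ell_1^L\!\circ g \;+\; (-1)^{|g|+1}\, g\circ \delta_C,
\]
where $\ell_1^L$ denotes the differential on $L$. Minimality of $L$ kills the first summand and formality of $X$ kills the second, so $\ell_1 \equiv 0$, and hence $\ell_1^\phi \equiv 0$ as required.

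The argument is uniform across the four cases: the pointed variant simply replaces $(C,\Delta)$ by $(\overline C,\overline\Delta)$ in the definition of the higher brackets but does not touch the computation of $\ell_1^\phi$, and the universal-cover variants are truncations of $\mathcal{H}om(C,L)$ and so inherit the vanishing of $\ell_1^\phi$. I do not anticipate a real obstacle, since the proof is essentially an unpacking of definitions; the only point worth a sanity check is that in the non-finite setting of Theorem \ref{casonofinito} one is still free to take $C = H_*(X;\mathbb Q)$ as the finite-type CDGC model (with trivial differential) and the augmentation-factoring morphism as the model of the constant map, which is standard.
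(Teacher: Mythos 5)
Your proposal is correct and follows essentially the same route as the paper's own proof: for the constant map the Maurer--Cartan element vanishes so $\ell_k^\phi=\ell_k$, formality allows $C$ to be taken with zero differential, and minimality of $L$ then kills the remaining term of $\ell_1$. You merely spell out in more detail the two steps the paper states in one line each (why $\phi=0$ and why both summands of $\ell_1$ vanish), which is fine.
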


\begin{proof} For the constant map the $L_\infty$ model in Theorem \ref{principallibre} is  simply \break $(\mathcal{H}om(C,L),\{\ell_k\}_{k\ge 1})$, i.e., with no perturbation in the $k$-ary brackets. On the other hand, as $X$ is formal, we may choose $C$ to be $H^*(X;\cu)$ with zero differential. Therefore, $\ell_1=0$. Other cases are deduced analogously.
\end{proof}

\section{Rational nilpotency orders of mapping \break spaces}

 Let $L$ be a nilpotent $L_\infty$ algebra. We say that
 $L$ is of {\em  nilpotency order $i_0$} and write $\nilsin L=i_0$ if  $F^iL=0$ for $i>i_0$ and $F^{i_0}L\not=0$.  If $L$ is not nilpotent we write $\nilsin L=\infty$. This is not in general an $L_\infty$ invariant  as isomorphic nilpotent $L_\infty$ algebras can have different nilpotency orders. However, if  $L$ and $L'$ are isomorphic nilpotent minimal  $L_\infty$ algebras, then $\nilsin L=\nilsin L'$.

\begin{definition} Let $X$ be a CW-complex which is either  of finite type or simply connected. The {\em rational nilpotency order} of $X$, $\nil X$ is defined as $\nilsin L$, being $L$ the minimal $L_\infty$ model of $X$.
\end{definition}

This invariant can be related to the classical  {\em rational Whitehead length} of $X$, $\Wh X$, defined as the length of the longest non trivial iterated Whitehead bracket in $\pi_*(X_\cu)$. Indeed, we prove:

\begin{proposition}\label{whitehead} For any $L$, $L_\infty$ model of $X$, $\Wh X\le \nilsin L$. In particular, $\Wh X\le \nil X$ and equality holds if $X$ is coformal.
\end{proposition}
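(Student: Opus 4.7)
The plan is to convert a nontrivial iterated Whitehead bracket of length $n$ in $\pi_*(X_\cu)$ into a nonzero element of $F^n L$, for any $L_\infty$ model $L$ of $X$. The key input is the identification $\pi_k(X_\cu) \cong sH_{k-1}(L,\ell_1)$, under which the Whitehead product corresponds to the bracket on $H_*(L,\ell_1)$ induced by $\ell_2$. For a DGL model this is the classical Quillen dictionary; for an arbitrary $L_\infty$ model one transports it along an $L_\infty$ quasi-isomorphism to the minimal model or to a DGL model, using that the first component $f^{(1)}$ of such a morphism respects $\ell_2$ on homology by the arity-two axiom of $L_\infty$-morphisms.

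Suppose $\Wh X = n$ and fix classes $\alpha_1,\dots,\alpha_n \in \pi_*(X_\cu)$ whose iterated Whitehead bracket in some nesting is nonzero. Choose $\ell_1$-cycles $\tilde\alpha_i \in L$ representing the desuspensions of the $\alpha_i$. The arity-two Jacobi identity $\ell_1\ell_2 = \ell_2(\ell_1\otimes 1)\pm\ell_2(1\otimes\ell_1)$ shows that $\ell_1$ is a graded derivation of $\ell_2$, so the iterated bracket $\omega = [\tilde\alpha_1,[\tilde\alpha_2,\dots,[\tilde\alpha_{n-1},\tilde\alpha_n]\dots]]$ is itself an $\ell_1$-cycle. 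By the identification above its homology class corresponds to the chosen iterated Whitehead bracket; since the latter is nonzero, $\omega \ne 0$ in $L$.

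From the definition of the lower central series one has $\ell_2(F^iL,F^jL)\subset F^{i+j}L$, and since each $\tilde\alpha_i$ lies in $F^1 L = L$, induction on the nesting depth places $\omega \in F^n L$. Combined with $\omega\ne 0$, this gives $F^n L \ne 0$, so $\nilsin L \ge n = \Wh X$. Specializing to the minimal $L_\infty$ model of $X$ yields $\Wh X \le \nil X$. For the coformal case I would use that $X$ is coformal if and only if its minimal $L_\infty$ model $L_0$ has $\ell_k=0$ for all $k\ge 3$; then $F^\bullet L_0$ reduces to the classical lower central series of the graded Lie algebra $(L_0,\ell_2)$, so $\nilsin L_0$ is precisely the length of the longest nontrivial iterated $\ell_2$-bracket, which by the first step equals $\Wh X$.

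The main subtlety will be the tail of Step 1: rigorously identifying the homology class of $\omega$ with the chosen iterated Whitehead bracket when $L$ carries nontrivial higher brackets and a nonvanishing differential. A clean route is to fix an $L_\infty$ quasi-isomorphism $\mu\colon L_0 \to L$ from the minimal model, so that $\mu^{(1)}$ sends each canonical representative in $L_0 = s^{-1}\pi_*(X_\cu)$ to an $\ell_1$-cycle in $L$; the arity-two component of the $L_\infty$-morphism equation then reads $\mu^{(1)}(\ell_2^{L_0}(a,b)) - \ell_2^L(\mu^{(1)}(a),\mu^{(1)}(b)) = \ell_1\mu^{(2)}(a,b)$, so $\omega$ agrees, up to $\ell_1$-boundaries, with $\mu^{(1)}$ of the iterated $\ell_2^{L_0}$-bracket of the relevant generators in $L_0$, and the latter is exactly the chosen iterated Whitehead bracket.
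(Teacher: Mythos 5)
Your proposal is correct and follows essentially the same route as the paper: both arguments rest on the arity-two component of the $L_\infty$-morphism equation for a quasi-isomorphism connecting $L$ to a model carrying the Whitehead bracket on $\ell_1$-homology (you go from the minimal model into $L$, the paper goes from $L$ to a DGL model, but this is the same mechanism), so that a nonzero iterated Whitehead bracket of length $n$ forces a nonzero cycle in $F^nL$; the coformal case is handled identically. No gaps.
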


\begin{proof} It reduces to show that if $f\colon\mathcal{C}_\infty(L)\to \mathcal{C}_\infty(L')$ is a quasi-isomorphism of $L_\infty$ algebras in which
 $L'$ is a \lie, then $\nilsin L$ is greater than
 or equal that the usual nilpotency order of the \lie $H_*(L',\ell'_1)$. Let $\alpha,\beta\in H_*(L',\ell'_1)$ such that  $[\alpha,\beta]\not=0$. As $f$ is a
quasi-isomorphism, there are classes $\overline a,\overline b\in H_*(L,\ell_1)$ such that $H_*(f^{(1)})(\overline a)=\alpha$ and $H_*(f^{(1)})(\overline b)=\beta$. Then, as $a,b$ are cycles, the second equation satisfied by an $L_\infty$ morphism reads
$$
\ell'_1\bigl(f^{(2)}(a\otimes b)\bigr)+[f^{(1)}(a),f^{(1)}(b)]-f^{(1)}\bigl(\ell_2(a\otimes b)\bigr)=0.
$$
Passing to homology this equation becomes $[\alpha,\beta]-H_*(f^{(1)})\bigl(\overline{\ell_2(a\otimes b)}\bigr)=0$ and therefore
 $\ell_2(a\otimes b)$ is a non zero element in $F^2L$.
An inductive argument show then that any non zero iterated bracket of length $n$ in $H_*(L',\ell'_1)$ produces a non zero element in $F^nL$. Thus,
$$
 \nilsin H_*\bigl(\lambda(X)\bigr)=\Wh X\le \nilsin L
 $$
 and, in particular, $\Wh X\le \nil X$.
 Finally, if $X$ is coformal,  the homotopy Lie algebra of $X$ is precisely its minimal $L_\infty$ model.
 \end{proof}

\begin{proof}[Proof of Theorem \ref{lemanilpotencia}]
First, observe that if $(L,\{\ell_i\})$ and $(L,\{\ell_i^z\})$ are $L_\infty$ algebras
as in Proposition \ref{relativa}, then $\nilsin (L,\{\ell_i\})\ge \nilsin (L,\{\ell_i^z\})$. This is due to the fact that each $\ell_i^z$ is
 a perturbation of $\ell_i$, i.e., $[x_1,\dots ,x_k]_\mc=[x_1,\dots ,x_k]+\Phi$, $\Phi\in F^{>k}L$. Hence,
 $$\nilsin (\mathcal{H}om(C,L), \{\ell^\phi_k\}_{k\ge 1} )\le \nilsin (\mathcal{H}om(C,L), \{\ell_k\}_{k\ge 1} )$$
and
 $$\nilsin (\mathcal{H}om(\overline C,L), \{\ell^\phi_k\}_{k\ge 1} )\le \nilsin (\mathcal{H}om(\overline C,L), \{\ell_k\}_{k\ge 1} ).$$

 Thus,
 it remains to show that both $\nilsin (\mathcal{H}om(C,L), \{\ell_k\}_{k\ge 1} )$ and \break $\nilsin (\mathcal{H}om(\overline C,L), \{\ell_k\}_{k\ge 1} )$ are bounded above by $\nilsin L$. For it assume $\nilsin L=n_0$ and choose $f_1,\dots ,f_n\in \mathcal{H}om(C ,L)$ with $n>n_0$. Then, for any partition $i_1,\dots,i_k$ of $n$, consider the following commutative diagram:
$$\xymatrix{C \ar[rrrr]^{\bigl[[f_1,\dots f_{i_1}],\cdots ,[f_{{i_k}+1},\cdots ,f_n] \bigr]}
 \ar[d]_{{\Delta }^{(k-1)}}&&&&L\\
C \otimes \cdots \otimes C  \ar[rrrr]_{[f_1,\dots
,f_{i_1}]\otimes \cdots \otimes [f_{i_k+1},\dots ,
f_{n}]}\ar[d]_{{\Delta }^{(i_1-1)}\otimes \cdots \otimes
\Delta^{(i_k-1)}}&&&&L\otimes
\cdots \otimes L\ar[u]_{\ell_k }\\
C\otimes \cdots \otimes C \ar[rrrr]_{f_1\otimes
\cdots \otimes f_n}&&&&L\otimes \cdots \otimes
L\ar[u]_{\ell_{i_1}\otimes \cdots \otimes \ell_{i_k}} .}$$
As $\ell_k\circ (\ell_{i_1}\otimes \cdots \otimes \ell_{i_k})=0$, $\bigl[[f_1,\dots, f_{i_1}],\dots ,[f_{{i_k}+1},\dots ,f_n] \bigr]$ is
 also zero and $\nilsin \mathcal{H}om(C,L)\le n_0$. The same proof works for the ``pointed case''.
 \end{proof}

Observe that Corollary  \ref{nilpotencia} is immediately proved by applying together Proposition \ref{whitehead} and Theorem \ref{lemanilpotencia} to the $L_\infty$  models obtained in theorems \ref{principallibre}, \ref{principalbasado} and \ref{casonofinito}.  Remark also that, whenever the $L_\infty$ models of these theorems turn out to be minimal, the bounds in Corollary  \ref{nilpotencia} can be sharpened. For instance:

 \begin{corollary} If $X$ is a formal space and $c\colon X\to Y$ denotes the constant map, then
$$\nil map_c(X,Y)\le \nil Y$$  if $X$ is finite and
 $$ \nil \widetilde{map}_c(X,Y)\le \nil Y$$  if $X$ is of finite type. The same holds in the pointed case.
\end{corollary}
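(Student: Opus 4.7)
The plan is to combine Proposition \ref{modelominimal} with Theorem \ref{lemanilpotencia}, observing that Corollary \ref{nilpotencia} only bounded $\Wh$ because its models were not a priori minimal; in the formal/constant setting those models are minimal, so their nilpotency order computes $\nil$ of the mapping space directly.

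First I would take $L$ to be the minimal $L_\infty$ model of $Y$, so by definition $\nilsin L = \nil Y$. Since $c$ is the constant map, the associated Maurer-Cartan element $\phi \in Hom_{-1}(C,L)$ is zero, whence $\ell_k^\phi = \ell_k$ for all $k\ge 1$. Since $X$ is formal, I may choose $C = H_*(X;\cu)$ (with zero differential) as a coalgebra model, and correspondingly $\overline C$ in the pointed case.

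Next I would invoke Proposition \ref{modelominimal} to conclude that both $(\mathcal{H}om(C,L), \{\ell_k\}_{k\ge 1})$ and $(\mathcal{H}om(\overline C,L), \{\ell_k\}_{k\ge 1})$ are themselves \emph{minimal} $L_\infty$ algebras. By Theorems \ref{principallibre} and \ref{principalbasado} they are $L_\infty$ models of $\map_c(X,Y)$ and $\map_c^*(X,Y)$ respectively, hence by the uniqueness of minimal $L_\infty$ models (Theorem \ref{unicidad}(ii)) each is $L_\infty$ isomorphic to the minimal model of the corresponding mapping space. As nilpotency order is invariant among isomorphic minimal $L_\infty$ algebras (the observation at the opening of Section~4), I obtain
\[\nil \map_c(X,Y) = \nilsin (\mathcal{H}om(C,L), \{\ell_k\}_{k\ge 1}),\]
and similarly in the pointed case. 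Applying Theorem \ref{lemanilpotencia} to the right-hand side yields $\nilsin(\mathcal{H}om(C,L),\{\ell_k\}) \le \nilsin L = \nil Y$, giving the desired bound.

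Finally, for the universal cover statement I would repeat the argument with Theorem \ref{casonofinito}, using $\widetilde{\mathcal{H}om}(C,L)$ and $\widetilde{\mathcal{H}om}(\overline C,L)$; passing to the universal cover simply restricts degrees and does not alter the brackets, so minimality is preserved and the identical chain of deductions applies. The only potential obstacle is the isomorphism-invariance step -- one must check that the $L_\infty$ isomorphism to \emph{the} minimal model preserves the lower central filtration -- but this is immediate from the fact that an isomorphism of minimal $L_\infty$ algebras identifies $k$-ary brackets with $k$-ary brackets and hence sends $F^iL$ to $F^iL'$. Everything else is a direct application of results already established in the paper.
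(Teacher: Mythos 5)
Your proposal is correct and follows essentially the same route as the paper: invoke Proposition \ref{modelominimal} to identify $(\mathcal{H}om(C,L),\{\ell_k\}_{k\ge 1})$ (and its pointed and universal-cover variants) as the minimal $L_\infty$ model of the mapping space, so that its nilpotency order computes $\nil$ of that space, and then apply Theorem \ref{lemanilpotencia}. The extra details you supply --- that $\phi=0$ for the constant map, the uniqueness of minimal models via Theorem \ref{unicidad}(ii), and the isomorphism-invariance of $\nilsin$ among minimal $L_\infty$ algebras --- are exactly the points the paper leaves implicit.
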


\begin{proof} We consider only the case in which $X$ is finite as the other is deduced analogously. By Proposition \ref{modelominimal}, if $L$ is the $L_\infty$ minimal model of $Y$, then
$(\mathcal{H}om(C,L),\{\ell_k\}_{k\ge 1})$ (resp. $(\mathcal{H}om(\overline C,L),\{\ell_k\}_{k\ge 1})$) is the $L_\infty$ minimal model of $\map_c(X,Y)$ (resp. $\map_c^*(X,Y)$). Thus, $\nil \map_c(X,Y)=\nilsin (\mathcal{H}om(C,L),\{\ell_k\}_{k\ge 1})$ (resp.
$\nil \map_c^*(X,Y)=\nilsin (\mathcal{H}om(\overline C,L),\{\ell_k\}_{k\ge 1})$), and Theorem \ref{lemanilpotencia} finishes the proof.
\end{proof}

\begin{example} As in \cite{lupton2}, consider the space $\map(S^3,Y)$ where $Y$ is the infinite dimensional complex  with Sullivan minimal model
$(\Lambda (x_2, y_3, z_3, t_7), d)$ in which (subscripts shall denote degrees henceforth) $x_2,y_3,z_3$ are cycles and
$dt_7=x_2y_3z_3$. In this case choose $C=\langle 1,\alpha \rangle$ with $\alpha$ a primitive cycle of degree $3$, and $L=\langle a_1,b_2,r_2,s_6\rangle$ the $L_\infty$ minimal model of $Y$ in which  the only non zero bracket is $\ell_3(a_1,b_2,r_2)=s_6$. Consider the map $f\colon S^3\to Y$ modeled by the Maurer-Cartan element $\phi\colon C\to L$, $\phi(\alpha)=b_2$.  By Theorem \ref{principallibre}, $\map_f(S^3,Y)$ and $\map_c(S^3,Y)$ are modeled by $(\mathcal{H}om(C,L),\{\ell_k^\phi\}_{k\ge1})$ and $(\mathcal{H}om(C,L),\{\ell_k\}_{k\ge1})$ respectively. Moreover both models are minimal as one observes that $\ell_k,\ell_k^\phi=0$ for $k=1$ and $k\ge 4$.
On the other hand if $f_1,g_2,h_2\in \mathcal{H}om(C,L)$  are given by $f_1(1)=a_1$, $g_2(1)=b_2$, $h_2(1)=r_2$ one checks that $\ell_3^\phi(f_1,g_2,h_2)(1)=s_6$ and therefore,
$$
\nil\map_f(S^3,Y)=\nil\map_c(S^3,Y) =\nil Y=3,
$$
according to Theorem \ref{lemanilpotencia}.

With respect to the rational Whitehead length, the situation is drastically different. Indeed, as the $L_\infty$ models above are minimal and of finite type, the Whitehead length of $\map_c(S^3,Y)$ and $\map_f(S^3,Y)$ are controlled by $\ell_2$ and $\ell_2^\phi$ respectively. While $\ell_2=0$ one checks that $\ell_2^\phi(f_1,g_2)(\alpha)=s_6$ and therefore,
$$
\Wh Y=\Wh \map_c(S^3,Y)=1<\Wh \map_f(S^3,Y)=2.
$$
\end{example}

\begin{example} Let $X$ be a space whose rational homotopy Lie algebra
 $L = \pi_*(\Omega X)\otimes \mathbb Q$ contains a free Lie algebra $\mathbb L(a_1,a_2)$ with $\vert a_1\vert,\vert a_2\vert >1$.
Then the rational homotopy Lie algebra  of the space $Z = \map^*(\mathbb CP^\infty, X)$ also contains a free Lie algebra with two generators.
 A coalgebra model $C$ for $\mathbb CP^\infty$ is given by the dual of the free algebra $\Lambda x$, with $\vert x\vert = 2$. That is, $C$ is   the vector space generated by  elements $x_i$ of degree $2i$, $i\ge 0$, and $\Delta (x_r) = \sum_{i+j=r} x_i\otimes x_j$.
An $L_\infty$ model for $X$ is given by $L$ with some higher order brackets. Denote  by $f_1$ and $f_2$ the elements of
$ Hom(C, L)$ defined by $f_i(x_1)=a_i$,  $f_i(x_j)=0$, for $j\not=1$.    Then $f_1$ and $f_2$ represent homotopy classes of $Z$. Now observe that
$$\bigl[f_{i_1}, [f_{i_2},[ \dots ,[f_{i_{r-1}}, f_{i_r}]\bigr]\dots\bigr] (x_{r+1}) = \bigl[a_{i_1}, [a_{i_1},[\dots ,[a_{i_{r-1}}, a_{i_r}]\bigr]\dots \bigr]\,.$$
This implies that $\mathbb L(f_1, f_2)\subset \pi_*(\Omega Z)\otimes \mathbb Q$.
\end{example}


\begin{thebibliography}{99}

\bibitem{Berg} A. Berglund, Rational homotopy theory of mapping spaces via Lie theory for $L_\infty$-algebras, Arxiv preprint, arXiv:1110.6145v1 (2011).

\bibitem{browncharba} E. H. Brown and R. H. Szczarba, On the rational homotopy type of function spaces, {\em Trans. Amer. Math. Soc.}, {\bf 349} (1997), 4931--4951.


\bibitem{urtzi2} U. Buijs and A. Murillo, The rational homotopy Lie algebra of function spaces,  {\em Comment. Math.
Helv.} {\bf 83} (2008), 723--739.

\bibitem{urtzi3} U. Buijs, Y. F\' elix and A. Murillo, Lie models
for the components of sections of a nilpotent fibration, {\em
Trans. Amer. Math. Soc.} {\bf 361}(10) (2009), 5601--5614.


\bibitem{urtzi4} U. Buijs, Y. F\'elix and A. Murillo, $L_\infty$ models of based mapping spaces,   {\em
J. of the Math. Soc.
of Japan}, {\bf 63} (2011), 503--524.

\bibitem{cs} M. Chas and D. Sullivan, String Topology, Arxiv preprint, arXiv:math/9911159v1  (1999).


\bibitem{FHT} Y. F\'elix, S. Halperin, J. Thomas, Rational
homotopy theory, {\em Springer GTM}, {\bf 205} (2000).

\bibitem{fuk} K. Fukaya, Deformation theory, homological algebra and  mirror symmetry. \textit{Geometry and physics of branes, Como 2001}, Ser. High Energy Phys. Cosmol. Gravit., IOP, Bristol, (2003), 121--209.

\bibitem{getzler} E. Getzler, Lie theory for nilpotent $L_\infty $-algebras, {\em Ann. of
Math.} {\bf 170} (2009), 271--301.

\bibitem{Ha} A. Haefliger, Rational Homotopy of the space of
sections of a nilpotent bundle, {\em Trans. Amer. Math. Soc.},
{\bf 273} (1982), 609--620.

\bibitem{hen} A. Henriques, Integrating $L_\infty$-algebras, {\em Compositio Mathematica}, {\bf 144} (2008), 1017--1045.

\bibitem{hub-kai} J. Huebschmann and T. Kadeishvili, Small models for  chain algebras. \textit{Math. Z.} \textbf{207}(2) (1991), 245--280.

\bibitem{kai} T. V. Kadeishvili, The algebraic structure in the  homology of an $A(\infty)$-algebra. \emph{Soobshch. Akad. Nauk Gruzin.  SSR} \textbf{108}(2) (1983), 249--252.



\bibitem{kon} M. Kontsevich, Deformation quantization of Poisson manifolds, {\em Lett. Math. Phys.}, {\bf 66}(3) (2003), 157--216.

\bibitem{kontsoib1} M. Kontsevich and Y. Soibelman, Deformations of  algebras over operads and Deligne's conjecture. {\em G. Dito and D.  Sternheimer (eds) Conf\'erence Mosh\'e Flato 1999, Vol. I (Dijon  1999), Kluwer
Acad. Publ., Dordrecht} (2000) 255--307.

\bibitem{kontsoib2} M. Kontsevich and Y. Soibelman, Homological mirror  symmetry and torus fibrations. \emph{Symplectic geometry and mirror  symmetry, Seoul 2000}, World Sci. Publ., River Edge, NJ,  (2001),  203--263.


\bibitem{lada1} T. Lada and M. Markl, Strongly homotopy Lie
algebras, {\em Comm. in Algebra}, {\bf 23}(6) (1995), 2147--2161.

\bibitem{lada2} T. Lada and J. Stasheff, Introduction to sh
algebras for physicists, {\em Int. J. Theor. Phys.}, {\bf 32}
(1993), 1087--1104.

\bibitem{bruno} J.-L. Loday and B. Vallette, Algebraic Operads (Version 0.99), in http://math.unice.fr/~brunov/Operads.pdf.

\bibitem{lupton2} G. Lupton and S. B. Smith, Whitehead products in
function spaces: Quillen model formulae, {\em Jour. Math. Soc.
of Japan}, {\bf 62} (2010), 1--33.

\bibitem{ma} M. Majewski, Rational homotopical models and
uniqueness, {\em Mem. Amer. Math. Soc.}, {\bf 682} (2000).

\bibitem{merk} S. A. Merkulov, Strong homotopy algebras of a  K\"{a}hler manifold.
\textit{Internat. Math. Res. Notices} {\bf 3} (1999), 153--164.

\bibitem{quillen} D. Quillen, Rational homotopy theory, {\em Ann.
of Math.}, {\bf 90} (1969), 205--295.


\bibitem{S-T} H. Scherer and D. Tanr\'e, Homotopie mod\'er\'ee et temp\'er\'ee avec les
coalg\`ebres. Applications aux espaces fonctionnels, {\em Arch.
Math.}, {\bf 59}(2) (1992), 130--145.


\bibitem{silveira}F. E. A. da Silveira, Rational homotopy theory of fibrations,
{\em Pacific Journal of Math.}, {\bf 113}(1) (1984), 1--34.

\bibitem{ss} M. Shlessinger and  J. Stasheff, The Lie algebra structure of tangent cohomology and deformation theory, {\em Jour. Pure Appl. Algebra}, {\bf 38} (1985), 313--322.


\end{thebibliography}
\end{document}